\documentclass[12pt]{amsart} 
\setcounter{tocdepth}{2}

\usepackage{amsfonts,graphics,amsmath,amsthm,amsfonts,amscd,amssymb,amsmath,latexsym,multicol}
\usepackage{epsfig,url}
\usepackage{flafter}
\usepackage{hyperref}

\pagestyle{plain}

\usepackage[small,nohug,UglyObsolete]{diagrams}
\diagramstyle[labelstyle=\scriptstyle]

\makeatletter

\def\jobis#1{FF\fi
  \def\preedicate{#1}%
  \edef\preedicate{\expandafter\strip@prefix\meaning\preedicate}%
  \edef\job{\jobname}%
  \ifx\job\preedicate
}

\makeatother

\if\jobis{proposal}%
 \def\try{subsection}%
\else
  \def\try{section}%
\fi

 
 
 

\theoremstyle{plain}
\newtheorem{theorem}{Theorem}[\try]
\newtheorem{corollary}[theorem]{Corollary}
\newtheorem{lemma}[theorem]{Lemma}

\newtheorem{definition-lemma}[theorem]{Definition-Lemma}
\newtheorem{question}[theorem]{Question}
\newtheorem{definition}[theorem]{Definition}

\newtheorem{conjecture}[theorem]{Conjecture}



\def\ideal#1.{I_{#1}}
\def\ring#1.{\mathcal {O}_{#1}}
\def\fring#1.{\hat{\mathcal {O}}_{#1}}
\def\proj#1.{\mathbb {P}(#1)}
\def\pr #1.{\mathbb {P}^{#1}}
\def\dpr #1.{\hat{\mathbb {P}}^{#1}}
\def\af #1.{\mathbb A^{#1}}
\def\Hz #1.{\mathbb F_{#1}}
\def\Hbz #1.{\overline{\mathbb F}_{#1}}
\def\fb#1.{\underset #1 {\times}}
\def\rest#1.{\underset {\ \ring #1.} \to \otimes}
\def\au#1.{\operatorname {Aut}\,(#1)}
\def\deg#1.{\operatorname {deg } (#1)}
\def\pic#1.{\operatorname {Pic}\,(#1)}
\def\pico#1.{\operatorname{Pic}^0(#1)}
\def\picg#1.{\operatorname {Pic}^G(#1)}
\def\ner#1.{NS (#1)}
\def\rdown#1.{\llcorner#1\lrcorner}
\def\rfdown#1.{\lfloor{#1}\rfloor}
\def\rup#1.{\ulcorner{#1}\urcorner}
\def\rcup#1.{\lceil{#1}\rceil}
\def\cone#1.{\operatorname {NE}(#1)}
\def\ccone#1.{\overline{\operatorname {NE}}(#1)}
\def\coef#1.{\frac{(#1-1)}{#1}}
\def\vit#1.{D_{\langle #1 \rangle}}
\def\mm#1.{\overline {M}_{0,#1}}
\def\H1#1.{H^1(#1,{\ring #1.})}
\def\ac#1.{\overline {\mathbb F}_{#1}}

\def\adj#1.{\frac {#1-1}{#1}}
\def\spn#1.{\overline{#1}}
\def\pek#1.#2.{\Cal P^{#1}(#2)}
\def\plk#1.#2.{\Cal P^{\leq #1}(#2)}
\def\ev#1.{\operatorname{ev_{#1}}}
\def\ilist#1.{{#1}_1,{#1}_2,\dots}
\def\bminv#1.{(\nu_1,s_1;\nu_2,s_2;\dots ;\nu_{#1},s_{#1};\nu_{r+1})}
\def\zinv#1.{(\nu_1,s_1;\nu_2,s_2;\dots ;\nu_{#1},s_{#1};0)}
\def\iinv#1.{(\nu_1,s_1;\nu_2,s_2;\dots ;\nu_{#1},s_{#1};\infty)}


\def\llist#1.#2.{{#1}_1,{#1}_2,\dots,{#1}_{#2}}
\def\ulist#1.#2.{{#1}^1,{#1}^2,\dots,{#1}^{#2}}
\def\lomitlist#1.#2.{{#1}_1,{#1}_2,\dots,\hat {{#1}_i}, \dots, {#1}_{#2}}
\def\lomitlistz#1.#2.{{#1}_0,{#1}_1,\dots,\hat {{#1}_i}, \dots, {#1}_{#2}}
\def\loc#1.#2.{\Cal O_{#1,#2}}
\def\fderiv#1.#2.{\frac {\partial #1}{\partial #2}}
\def\deriv#1.#2.{\frac {d #1}{d #2}}
\def\map#1.#2.{#1 \longrightarrow #2}
\def\rmap#1.#2.{#1 \dasharrow #2}
\def\emb#1.#2.{#1 \hookrightarrow #2}
\def\non#1.#2.{\text {Spec }#1[\epsilon]/(\epsilon)^{#2}}
\def\Hi#1.#2.{\text {Hilb}^{#1}(#2)}
\def\sym#1.#2.{\operatorname {Sym}^{#1}(#2)}
\def\Hb#1.#2.{\text {Hilb}_{#1}(#2)}
\def\Hm#1.#2.{\Hom_{#1}(#2)}
\def\prd#1.#2.{{#1}_1\cdot {#1}_2\cdots {#1}_{#2}}
\def\Bl #1.#2.{\operatorname {Bl}_{#1}#2}
\def\pl #1.#2.{#1^{\otimes #2}}
\def\mgn#1.#2.{\overline {M}_{#1,#2}}
\def\ialist#1.#2.{{#1}_1 #2 {#1}_2, #2\dots}
\def\pair#1.#2.{\langle #1, #2\rangle}
\def\vandermonde#1.#2.{\left|
\begin{matrix}
1 & 1 & 1 & \dots & 1\\
{#1}_1 & {#1}_2 & {#1}_3 & \dots & {#1}_{#2}\\
{#1}_1^2 & {#1}_2^2 & {#1}_3^2 & \dots & {#1}_{#2}^2\\
\vdots & \vdots & \vdots & \ddots & \vdots\\
{#1}_1^{#2-1} & {#1}_2^{#2-1} & {#1}_2^{#2-1} & \dots & {#1}_{#2}^{#2-1}\\
\end{matrix}
\right|
}
\def\vandermondet#1.#2.{\left|
\begin{matrix}
1 & {#1}_1   & {#1}_1^2 & \dots & {#1}_1^{#2-1}\\
1 & {#1}_2   & {#1}_2^2 & \dots & {#1}_2^{#2-1}\\
1 & {#1}_3   & {#1}_3^2 & \dots & {#1}_3^{#2-1}\\
\vdots & \vdots & \vdots & \ddots & \vdots\\
1 & {#1}_{#2}& {#1}_{#2}^2 & \dots & {#1}_{#2}^{#2-1}\\
\end{matrix}
\right|
}
\def\gr#1.#2.{\mathbb{G}(#1,#2)}


\def\alist#1.#2.#3.{{#1}_1 #2 {#1}_2 #2\dots #2 {#1}_{#3}}
\def\zlist#1.#2.#3.{#1_0 #2 #1_1 #2\dots #2 #1_{#3}}
\def\lomitlist30#1.#2.#3.{{#1}_0,{#1}_1 #2 \dots #2\hat {{#1}_i} #2\dots #2 {#1}_{#3}}
\def\lmap#1.#2.#3.{#1 \overset{#2}{\longrightarrow} #3}
\def\mes#1.#2.#3.{#1 \longrightarrow #2 \longrightarrow #3}
\def\ses#1.#2.#3.{0\longrightarrow #1 \longrightarrow #2 \longrightarrow #3 \longrightarrow 0}
\def\les#1.#2.#3.{0\longrightarrow #1 \longrightarrow #2 \longrightarrow #3}
\def\res#1.#2.#3.{#1 \longrightarrow #2 \longrightarrow #3\longrightarrow 0}
\def\Hi#1.#2.#3.{\text {Hilb}^{#1}_{#2}(#3)}
\def\ten#1.#2.#3.{#1\underset {#2}{\otimes} #3}
\def\lomitlist30#1.#2.#3.{{#1}_0 #2 {#1}_1 #2 \dots #2 \hat {{#1}_i} #2 \dots #2 {#1}_{#3}}
\def\mderiv#1.#2.#3.{\frac {d^{#3} #1}{d #2^{#3}}}


\def\Hom{\operatorname{Hom}}

\def\dim{\operatorname{dim}}

\def\deg{\operatorname{deg}}

\def\pgl{\operatorname{PGL}}

\def\rest{\operatorname{res}}

\def\WDiv{\operatorname{WDiv}}



\def\e{\Cal E}

\def\e1{E_1}
\def\e2{E_2}

\def\ds{\displaystyle}



\def\mapdown#1{\big\downarrow\rlap{$\vcenter
{\hbox{$\scriptstyle#1$}}$}}

\def\mapse#1{
{\vcenter{\hbox{$\mathop{\smash{\raise1pt\hbox{$\diagdown$}\!\lower7pt
\hbox{$\searrow$}}\vphantom{p}}\limits_{#1}\vphantom{\mapdown{}}$}}}}


\def\VR#1.{height#1pt&\omit&&\omit&&\omit&&\omit&&\omit&\cr}

\def\VRT#1.{height#1pt&\omit&&\omit&\cr}


\usepackage{graphicx}
\begin{document}
\title{The Sarkisov program}
\date{\today}
\author{Christopher D. Hacon} 
\address{Department of Mathematics \\  
University of Utah\\  
155 South 1400 East\\
JWB 233\\
Salt Lake City, UT 84112-0090, USA}
\email{hacon@math.utah.edu}
\author{James M\textsuperscript{c}Kernan} 
\address{Department of Mathematics\\ 
University of California at Santa Barbara\\ 
Santa Barbara, CA 93106, USA} 
\email{mckernan@math.ucsb.edu}
\address{Department of Mathematics\\ 
MIT\\ 
77 Massachusetts Avenue\\
Cambridge, MA 02139, USA}
\email{mckernan@math.mit.edu} 

\thanks{The first author was partially supported by NSF grant no: 0757897 and by the Clay
  Mathematics Institute.  The second author was partially supported by NSA grant no:
  H98230-06-1-0059, NSF grant no: 0701101 and an Eisenbud fellowship.  Some of this work
  was done whilst both authors were visiting MSRI and both authors would like to thank
  MSRI for its hospitality.}

\begin{abstract} Any two birational Mori fibre spaces are connected by a sequence of
Sarkisov links.
\end{abstract}

\maketitle

\tableofcontents

\section{Introduction}

We prove that any two birational Mori fibre spaces are connected by a sequence of
elementary transformations, known as Sarkisov links:
\begin{theorem}\label{t_main} Suppose that $\phi\colon\map X.S.$ and $\psi\colon\map
Y.T.$ are two Mori fibre spaces with $\mathbb{Q}$-factorial terminal singularities.  

Then $X$ and $Y$ are birational if and only if they are related by a sequence of Sarkisov
links.
\end{theorem}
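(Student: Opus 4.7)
The ``if'' direction is immediate: every Sarkisov link is by definition a birational map, so any composition of such links is birational. For the ``only if'' direction, let $\phi\colon\rmap X.Y.$ be a birational map between $\qf$ terminal Mori fibre spaces $X/S$ and $Y/T$. My plan is to realise a factorisation of $\phi$ as the output of a carefully chosen run of the minimal model program with scaling, and then to decompose that run into individual Sarkisov links by means of the two-ray game.

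To set up the MMP, I would fix a very ample divisor $A$ on $Y$ and let $H$ denote the strict transform on $X$ of a general member of $|nA|$ for $n\gg 0$, then scale $H$ by the Sarkisov threshold $\mu$ so that the pair $(X,\frac{1}{\mu}H)$ is klt and $K_X+\frac{1}{\mu}H$ is numerically trivial on the fibres of $X/S$. Running a $(K_X+\frac{1}{\mu}H)$-MMP over $\sp \mathbb{Z}$ with scaling, where the scaling divisor is chosen so that the program moves in the direction of $(Y,\frac{1}{\mu}A)$, produces a sequence of $\qf$ terminal models. The existence of the required flips and the finite generation needed for this MMP are supplied by the main existing results on the log minimal model program; the chosen scaling guarantees that the output is a Mori fibre space which is isomorphic in codimension one to $Y/T$, and uniqueness of Mori fibre space structures within a klt birational class then identifies it with $Y/T$ itself.

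The crucial step is to identify each segment of this MMP connecting two consecutive Mori fibre space structures as a single Sarkisov link. This is the \emph{two-ray game}: at an intermediate Mori fibre space $X_i/S_i$, the first ray of the relative Mori cone over $\sp\mathbb{Z}$ is the fibration to $S_i$, while the MMP moves against a second extremal ray $R$. Contracting $R$ either directly yields a divisorial contraction producing a link of type III or IV, or it is a small contraction whose flip we take; in the latter case, we iterate flips along the transported ray until one eventually reaches a model on which the far end of the ray contracts either a divisor (type I) or introduces a new fibration (type II). In all cases one has constructed exactly one Sarkisov link, and the global MMP is thereby parcelled into a finite concatenation of such links.

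The principal obstacle I anticipate is the termination of the entire procedure, which does not follow from generic termination-of-flips statements. The device is a monovariant known as the \emph{Sarkisov degree}, a lexicographically ordered triple consisting of $\mu$, the canonical threshold of $(X,\frac{1}{\mu}H)$, and the number of divisorial valuations computing this threshold. One checks that each Sarkisov link strictly decreases the Sarkisov degree; since the values lie in a discrete, well-ordered set and are bounded below by the analogous invariant attached to $Y/T$, the sequence of links must be finite and must terminate at $Y/T$, producing the desired factorisation of $\phi$.
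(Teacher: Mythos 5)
You have reconstructed the \emph{classical} Sarkisov program of Sarkisov, Reid and Corti (pull back a linear system from $Y$, play the $2$-ray game, and argue termination via the Sarkisov degree), which is precisely the approach this paper sets out to avoid. The decisive gap is your termination step. The Sarkisov degree does \emph{not} lie in a discrete, well-ordered set: the threshold $\mu$ is a positive rational number, and one must prove that the set of values of $\mu$ arising along the algorithm contains no infinite strictly decreasing sequence, that the canonical thresholds appearing satisfy the ascending chain condition, and that the count of crepant divisors is controlled when the first two invariants are constant. These chain conditions are exactly where the difficulty lives: Corti's threefold proof devotes most of its effort to them, and in dimension at least four they were not available. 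Writing ``one checks that each link strictly decreases the Sarkisov degree; since the values lie in a discrete, well-ordered set \dots'' assumes the theorem's hardest content. There are secondary problems as well: you cannot appeal to ``uniqueness of Mori fibre space structures within a klt birational class'' to identify the output with $Y/T$ --- Mori fibre spaces are badly non-unique in a birational class (that non-uniqueness is what the theorem is about), and the classical argument instead needs a Noether--Fano type criterion to recognise when the algorithm has arrived at $Y$. Finally, the existence of each link via the $2$-ray game requires termination of specific relative MMPs, which does not follow from the general finite-generation results you invoke.

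The paper takes a different route that sidesteps all of these issues. It reduces the statement to the case of two Sarkisov related Mori fibre spaces, i.e.\ two outputs of the $(K_Z+\Phi)$-MMP for a single pair $(Z,\Phi)$, and then works entirely on the side of divisors rather than curves: using finiteness of ample models from \cite{BCHM06}, it decomposes a carefully chosen general two-dimensional affine slice $V$ of $\WDiv_{\mathbb{R}}(Z)$ into finitely many rational polytopes $\mathcal{C}_{A,f_i}$ indexed by ample models, shows that the combinatorics of this decomposition encodes contractions between the models (\eqref{t_polytope}), and identifies Sarkisov links with the finitely many points of the boundary of $\mathcal{E}_A(V)$ lying in more than two polytopes (\eqref{t_two}). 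The factorisation is obtained by walking along this boundary from the polytope of $\phi\circ f$ to that of $\psi\circ g$; finiteness of the decomposition makes termination automatic, so no invariants and no chain conditions are needed.
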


Recall the following:
\begin{conjecture}\label{c_mori} Let $(Z,\Phi)$ be a kawamata log terminal pair.  

Then we may run $f\colon\rmap Z.X.$ the $(K_Z+\Phi)$-MMP such that either
\begin{enumerate} 
\item $(X,\Delta)$ is a log terminal model, that is $K_X+\Delta$ is nef, or 
\item there is a Mori fibre space $\phi\colon\map X.S.$, that is $\rho(X/S)=1$ and
$-(K_X+\Delta)$ is $\phi$-ample,
\end{enumerate} 
where $\Delta=f_*\Phi$.  
\end{conjecture}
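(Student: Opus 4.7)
The plan is to run the $(K_Z+\Phi)$-minimal model program with scaling of an ample divisor. Choose an ample $\mathbb{Q}$-divisor $H$ on $Z$ such that $(Z,\Phi+H)$ is klt and $K_Z+\Phi+H$ is nef. Starting from $Z_0=Z$, $\Phi_0=\Phi$, $H_0=H$, at each step $i$ I would set
$$\lambda_i=\inf\{\,t\geq 0 \,:\, K_{Z_i}+\Phi_i+tH_i \text{ is nef}\,\},$$
contract a $(K_{Z_i}+\Phi_i)$-negative extremal ray on which $K_{Z_i}+\Phi_i+\lambda_iH_i$ is trivial, perform the associated divisorial contraction or flip, and push forward $H_i$ to the next model. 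By construction this procedure always supplies a well-defined extremal ray to contract as long as $K+\Phi$ is not yet nef and no fibration has appeared, so the content of the conjecture is that it terminates.

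Next, I would split into two cases according to whether $K_Z+\Phi$ is pseudo-effective. In the pseudo-effective case, the goal is to terminate at a log terminal model. The natural route is to first treat the big case using the results of Birkar--Cascini--Hacon--McKernan, which provide existence of log terminal models together with finite generation of the log canonical ring; one then extends to arbitrary pseudo-effective klt pairs by perturbing $\Phi$ by a small multiple of an ample divisor to reduce to the big case, constructing models for each perturbation, and taking a limit as the perturbation shrinks to zero. In the non-pseudo-effective case the thresholds $\lambda_i$ remain bounded below by a positive constant; termination then forces the final step to be a $K+\Delta$-negative extremal contraction of fibre type, giving the required Mori fibre space.

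The main obstacle is termination of flips. One must rule out an infinite sequence of log flips for which the scaling parameters $\lambda_i$ accumulate at a positive number, and in the pseudo-effective case show that the $\lambda_i$ actually reach zero in finitely many steps. In the big case this is a consequence of BCHM, but in general the best available approaches rely on special termination combined with induction on dimension and on the ascending chain condition for log canonical thresholds, which are known only in limited ranges of dimension. Because neither route currently yields a complete proof in arbitrary dimension, the statement is framed as a conjecture rather than a theorem, and the remainder of the paper uses it as a working hypothesis when deducing Theorem~\ref{t_main}.
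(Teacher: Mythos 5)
The statement you are addressing is stated in the paper as a conjecture, and the paper offers no proof of it; it only records (citing \cite{BCHM06}) that the sole remaining open case is when $K_Z+\Phi$ is pseudo-effective but neither $\Phi$ nor $K_Z+\Phi$ is big. Your proposal correctly identifies this status, and your overall framework --- running the $(K_Z+\Phi)$-MMP with scaling of an ample divisor, with the non-pseudo-effective case and the big case handled by \cite{BCHM06} and termination being the obstruction in general --- is the standard and correct account of what is known.

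One caveat: the ``natural route'' you describe in your middle paragraph for arbitrary pseudo-effective klt pairs --- perturbing $\Phi$ by $\epsilon A$ to reduce to the big case and then ``taking a limit as the perturbation shrinks to zero'' --- is not actually a viable reduction, and you should not present it as if it were merely awaiting technical completion. The log terminal models of $K_Z+\Phi+\epsilon A$ can in principle vary without stabilising as $\epsilon\to 0$; controlling this requires finiteness of models for the family of perturbed pairs, which \cite{BCHM06} only establishes when $\Phi$ or $K_Z+\Phi$ is big. This failure of the limiting argument \emph{is} the open problem, not a route around it. Your final paragraph does walk this back and correctly concludes that the statement remains conjectural, which is consistent with the paper; but the middle paragraph, read on its own, overstates what the perturbation argument delivers.
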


We will refer to the log terminal model $X$ and the Mori fibre space $\phi$ as the output
of the $(K_Z+\Phi)$-MMP.  If $h\colon\rmap Z.X.$ is any sequence of divisorial
contractions and flips for the $(K_Z+\Phi)$-MMP then we say that $h$ is the result of
running the $(K_Z+\Phi)$-MMP.  In other words if $h$ is the result of running the
$(K_Z+\Phi)$-MMP then $X$ does not have to be either a log terminal model or a Mori fibre
space.

By \cite{BCHM06} the only unknown case of \eqref{c_mori} is when $K_Z+\Phi$ is
pseudo-effective but neither $\Phi$ nor $K_Z+\Phi$ is big.  Unfortunately the output is
not unique in either case.  We will call two Mori fibre spaces $\phi\colon\map X.S.$ and
$\psi\colon\map Y.T.$ \textit{Sarkisov related} if $X$ and $Y$ are outcomes of running the
$(K_Z+\Phi)$-MMP, for the same $\mathbb{Q}$-factorial kawamata log terminal pair
$(Z,\Phi)$.  This defines a category, which we call the Sarkisov category, whose objects
are Mori fibre spaces and whose morphisms are the induced birational maps $\rmap X.Y.$
between two Sarkisov related Mori fibre spaces.  Our goal is to show that every morphism
in this category is a product of Sarkisov links.  In particular a Sarkisov link should
connect two Sarkisov related Mori fibre spaces.  

\begin{theorem}\label{t_sarkisov} If $\phi\colon\map X.S.$ and $\psi\colon\map Y.T.$ 
are two Sarkisov related Mori fibres spaces then the induced birational map
$\sigma\colon\rmap X.Y.$ is a composition of Sarkisov links.
\end{theorem}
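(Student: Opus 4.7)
By definition of Sarkisov related, there is a $\mathbb{Q}$-factorial klt pair $(Z,\Phi)$ and two sequences of steps of the $(K_Z+\Phi)$-MMP producing the Mori fibre spaces $\phi\colon X\to S$ and $\psi\colon Y\to T$. The plan is to interpolate between these two MMPs by an MMP \emph{with scaling} and to identify each elementary transition at a critical value of the scaling parameter with a Sarkisov link. To this end I fix a common log resolution $p\colon W\to Z$ dominating both $X$ and $Y$, choose very general ample $\mathbb{Q}$-divisors $A$ on $X$ and $B$ on $Y$, and write $A_Z,B_Z$ for their strict transforms on $Z$ (and, on any intermediate model, the corresponding strict transforms).

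\textbf{The interpolating MMP with scaling.} For $t\in[0,1]$ consider the log pair $(Z,\Phi+tA_Z+(1-t)B_Z)$; after rescaling $A,B$ so that their coefficients are small, it is klt with big log canonical class, since $A_Z$ pushes forward to the ample $A$ on $X$ and $B_Z$ pushes forward to the ample $B$ on $Y$. At $t=1$ the pair $(X,\Delta_X+A)$ is a log terminal model (as $K_X+\Delta_X$ is $\phi$-antiample and $A$ is ample); at $t=0$ one analogously recovers $(Y,\Delta_Y+B)$. Running the $(K_Z+\Phi+A_Z)$-MMP with scaling of $B_Z-A_Z$, the existence and termination of flips from \cite{BCHM06} produces a finite sequence
\begin{equation*}
X=X_0\dasharrow X_1\dasharrow\cdots\dasharrow X_N=Y
\end{equation*}
together with a decreasing sequence of critical scaling values $1=t_0>t_1>\cdots>t_N=0$ at which the log terminal model jumps.

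\textbf{Identifying links, and the main obstacle.} The core of the argument is to show that each transition $X_{i-1}\dasharrow X_i$ is a Sarkisov link. At a critical value $t_i$, two distinct $(K+\text{boundary})$-trivial extremal rays become simultaneously active on a suitable $\mathbb{Q}$-factorial small extraction $X'_{i-1}$, and the resulting two-ray game between these rays yields precisely a Sarkisov link, with the four classical types corresponding to the combinatorial possibilities (divisorial, flipping, or Mori fibre) for the two extremal contractions. The principal obstacle lies in this identification: one must analyse the Mori cone at each critical value, run the two-ray game to completion while preserving $\mathbb{Q}$-factorial terminal singularities, and organise a well-founded induction (on an analogue of the classical Sarkisov degree) ensuring that the process terminates with the prescribed target $Y$ rather than cycling or wandering to an unrelated model. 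As in Corti's original three-dimensional argument, this identification step is the crux of the proof, and it depends essentially on the existence and termination of flips and on the full flexibility of MMP with scaling provided by \cite{BCHM06}.
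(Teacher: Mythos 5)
Your proposal is a strategy sketch rather than a proof, and the strategy as stated has a genuine geometric flaw. On the first point: you yourself flag that identifying each transition with a Sarkisov link and organising ``a well-founded induction on an analogue of the classical Sarkisov degree'' is ``the principal obstacle'' and ``the crux of the proof'', and you do not carry out either step. That is precisely the hard part of the classical Corti--Reid argument (even in dimension three one must establish the ascending chain condition for the canonical threshold and the other invariants), and it is exactly the part the paper's argument is designed to avoid; invoking the two-ray game at each critical value does not discharge it.

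On the second point: the one-parameter interpolation cannot produce Sarkisov links. You face a dichotomy at the outset. If the coefficients of $A$ and $B$ are rescaled to be small, then $K_X+\Delta_X+A$ is still negative on the fibres of $\phi$, so $(X,\Delta_X+A)$ is not a log terminal model and $K_Z+\Phi+tA_Z+(1-t)B_Z$ is not big (indeed $K_Z+\Phi$ is not even pseudo-effective, since its MMP ends in a Mori fibre space), so the claimed endpoints and the claimed bigness both fail. If instead the coefficients are taken large enough that the pairs are big and klt for all $t\in[0,1]$ (which is what one needs for $X$ and $Y$ to be log terminal models, e.g.\ $A\sim_{\mathbb{Q}}-(K_X+\Delta_X)+\phi^*C$ with $C$ sufficiently ample, as in the choice of $F$ and $G$ in \eqref{l_perturb}), then every model occurring along the MMP with scaling is a log terminal model of a big klt pair, and the wall crossings at the critical values are flips, flops (cf.\ \eqref{t_minimal}) or divisorial contractions between minimal models --- never Sarkisov links, because a Sarkisov link passes through Mori fibre space structures, which only appear where the log canonical class fails to be big. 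A segment joining two points of the big cone never reaches that locus. This is why the paper works with a carefully chosen \emph{two}-dimensional affine slice $V$ (\eqref{l_perturb}) and traverses the non-big part of the boundary of $\mathcal{E}_A(V)$ from $\Theta_0$ to $\Theta_1$: the two-dimensional chambers adjacent to that boundary arc correspond to Mori fibre spaces, and the points of the arc lying on more than two chambers produce the links (\eqref{t_two}). Your interpolation would have to be replaced by this boundary traversal (or by some other mechanism that forces the log canonical class to degenerate to non-big at each step) before the factorisation into links can even begin.
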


Note that if $X$ and $Y$ are birational and have $\mathbb{Q}$-factorial terminal
singularities, then $\phi$ and $\psi$ are automatically the outcome of running the
$K_Z$-MMP for some projective variety $Z$, so that \eqref{t_main} is an easy 
consequence of \eqref{t_sarkisov}.  

It is proved in \cite{BCHM06} that the number of log terminal models is finite if either $\Phi$
or $K_Z+\Phi$ is big, and it is conjectured that in general the number of log terminal models
is finite up to birational automorphisms.  Moreover Kawamata, see \cite{Kawamata07}, has
proved:
\begin{theorem}\label{t_minimal} Suppose that $\sigma\colon\rmap X.Y.$ is a birational 
map between two $\mathbb{Q}$-factorial varieties which is an isomorphism in codimension
one.  

If $K_X+\Delta$ and $K_Y+\Gamma$ are kawamata log terminal and nef and $\Gamma$ is the
strict transform of $\Delta$ then $\sigma$ is the composition of $(K_X+\Delta)$-flops.
\end{theorem}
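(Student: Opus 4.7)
The plan is in two main stages: first, show that $K_X+\Delta$ and $K_Y+\Gamma$ pull back to the same divisor on any common resolution, and second, realize $\sigma$ as the output of a KLT MMP in which every step is forced to be a $(K_X+\Delta)$-flop.

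For the first stage, I would take a common log resolution $p\colon W\to X$, $q\colon W\to Y$ of $\sigma$ and set $E=p^*(K_X+\Delta)-q^*(K_Y+\Gamma)$. Because $\sigma$ is an isomorphism in codimension one and $\Gamma=\sigma_*\Delta$, $E$ is exceptional for both $p$ and $q$. For any curve $C$ contracted by $p$, $p^*(K_X+\Delta)\cdot C=0$ while $q^*(K_Y+\Gamma)\cdot C=(K_Y+\Gamma)\cdot q_*C\geq 0$ by nefness of $K_Y+\Gamma$, so $-E$ is $p$-nef and the negativity lemma gives $-E\geq 0$. Reversing the roles of $p$ and $q$ shows $E\geq 0$, hence $E=0$ and $p^*(K_X+\Delta)=q^*(K_Y+\Gamma)$.

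For the second stage, I would choose an ample effective $\mathbb{Q}$-divisor $H$ on $Y$ and let $H_X=\sigma^{-1}_*H$; since $\sigma$ is an isomorphism in codimension one, $H_X$ is big on $X$. For $\epsilon>0$ small, $(X,\Delta+\epsilon H_X)$ is KLT and $K_X+\Delta+\epsilon H_X$ is big, so by \cite{BCHM06} we may run the $(K_X+\Delta+\epsilon H_X)$-MMP with scaling. It terminates at a log canonical model, which must equal $Y$ because $K_Y+\Gamma+\epsilon H$ is already ample and the log canonical model of a big KLT pair is unique. This produces a sequence $X=X_0\dashrightarrow X_1\dashrightarrow\cdots\dashrightarrow X_n=Y$; since $\sigma$ is an isomorphism in codimension one, no step of this MMP can contract a divisor, so every step is a flip.

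At each such flip the extremal ray $R$ satisfies $(K_{X_i}+\Delta_i+\epsilon H_{X_i})\cdot R<0$, while $K_{X_i}+\Delta_i$ remains nef (inductively, nefness is preserved under $(K+\Delta)$-trivial flips). Hence $0\leq(K_{X_i}+\Delta_i)\cdot R<\epsilon\,|H_{X_i}\cdot R|$. The hard part is to arrange that a single small $\epsilon$ forces $(K_{X_i}+\Delta_i)\cdot R=0$ at every stage of the MMP: for this I would appeal to the finiteness of models for varying boundary from \cite{BCHM06}, which ensures that only finitely many intermediate models and flipping rays occur as $\epsilon$ varies in a small interval. For each such ray not already $(K+\Delta)$-trivial, $(K+\Delta)\cdot R$ is bounded below by a positive constant; choosing $\epsilon$ smaller than the corresponding threshold forces every flip in the MMP to be $(K_X+\Delta)$-trivial, hence a $(K_X+\Delta)$-flop, completing the proof.
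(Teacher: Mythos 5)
The paper itself offers no proof of this statement: it is quoted as a theorem of Kawamata, with a citation to \cite{Kawamata07}, so your proposal can only be compared with Kawamata's published argument and with the machinery the paper builds for its own main result. Your two-stage plan is the standard ``small ample perturbation'' proof and is essentially sound. Stage one is correct up to a sign slip: with $-E$ being $p$-nef and $p_*E=0$, the negativity lemma yields $E\geq 0$ (not $-E\geq 0$); the symmetric computation for $q$ yields $-E\geq 0$, and together they give $E=0$ as you intend. In stage two, two points need more care than you give them. First, to know the MMP ends at $Y$ via $\sigma$ you must check that $\sigma$ is the ample model of $K_X+\Delta+\epsilon H_X$; ``uniqueness of the log canonical model'' suffices once you observe that the codimension-one isomorphism identifies $R(X,K_X+\Delta+\epsilon H_X)$ with $R(Y,K_Y+\Gamma+\epsilon H)$ and the latter is ample on $Y$ --- or, equivalently, one shows $p^*H_X-q^*H\geq 0$ by another application of negativity to $p$, using that $H$ is nef. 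Second, for the uniform choice of $\epsilon$, note that BCHM finiteness of models is stated for boundaries of the form (fixed ample)$+$(effective) varying in a polytope, whereas $\epsilon H_X$ is only big with ample part shrinking as $\epsilon\to 0$; one must either reduce to that setting using bigness of $K_X+\Delta+\epsilon H_X$, or argue as Kawamata does, one step at a time: on each fixed model with $K+\Delta$ nef, every $(K+\Delta+\epsilon D)$-negative extremal ray is $(K+\Delta)$-trivial for $0<\epsilon\ll 1$ by the cone theorem and the boundedness of lengths of extremal rays, and finiteness of models is invoked only to guarantee termination. As written, this step of your argument is a correct plan rather than a complete proof.

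It is also worth pointing out that the route closest to this paper's own methods is different from yours: one places pullbacks of $\Delta$ and $\Gamma$ in a suitable two-dimensional affine space $V$ of divisors as in \eqref{t_polytope} and \eqref{l_perturb}, joins the chambers $\mathcal{C}_{A,f}$ and $\mathcal{C}_{A,g}$ of $X$ and $Y$ by a path staying in the interior of the big cone, and reads off from \eqref{l_easy-cases}(2) that each wall crossed is a $(K_X+\Delta)$-flop; this is essentially Kawamata's geography-of-models argument. Your version produces an explicit sequence of flops from a single MMP run, at the cost of the uniform-$\epsilon$ bookkeeping, whereas the wall-crossing version gets finiteness and termination for free from the polytope decomposition.
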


Note that if the pairs $(X,\Delta)$ and $(Y,\Gamma)$ both have $\mathbb{Q}$-factorial
terminal singularities then the birational map $\sigma$ is automatically an isomorphism in
codimension one.

We recall the definition of a Sarkisov link.  Suppose that $\phi\colon\map X.S.$ and
$\psi\colon\map Y.T.$ are two Mori fibre spaces.  A Sarkisov link $\sigma\colon\rmap X.Y.$
between $\phi$ and $\psi$ is one of four types:
\begin{align*}
&\begin{diagram}
          &    \text{I}      &    \\
  X'      & \rDashto  &  Y   \\
 \dTo      &           &  \dTo_{\psi}\\
X      &           &  T  \\
\dTo^{\phi} & \ldTo &        \\
S          &           & 
\end{diagram}
&&
\begin{diagram}
          &    \text{II}      &    \\
 X'      &     \rDashto &  Y' \\
 \dTo     &              &  \dTo     \\
 X        &              &   Y   \\
\dTo^{\phi}&             & \dTo_{\psi} \\
 S        &         =   &  T
\end{diagram}
&&
\begin{diagram}
          &    \text{III}      &    \\
 X          & \rDashto      &  Y'\\
 \dTo^{\phi}  &         &  \dTo   \\
 S    &                &  Y \\
       & \rdTo &   \dTo_{\psi}    \\
       &            &    T 
\end{diagram}
&&
\begin{diagram}
       &   &    \text{IV} & &   \\
X      &   & \rDashto &    &  Y   \\
\dTo^{\phi} &       &      &   &  \dTo_{\psi}      \\
 S         &        &      &  &  T  \\
           & \rdTo  &      & \ldTo  &        \\
           &        &  R.   &        & 
\end{diagram}
\end{align*} 
There is a divisor $\Xi$ on the space $L$ on the top left (be it $L=X$ or $L=X'$) such
that $K_L+\Xi$ is kawamata log terminal and numerically trivial over the base (be it $S$,
$T$, or $R$).  Every arrow which is not horizontal is an extremal contraction.  If the
target is $X$ or $Y$ it is a divisorial contraction.  The horizontal dotted arrows are
compositions of $(K_L+\Xi)$-flops.  Links of type IV break into two types, IV${}_m$ and
IV${}_s$.  For a link of type IV${}_m$ both $s$ and $t$ are Mori fibre spaces.  For a link
of type IV${}_s$ both $s$ and $t$ are small birational contractions.  In this case $R$ is
not $\mathbb{Q}$-factorial; for every other type of link all varieties are
$\mathbb{Q}$-factorial.  Note that there is an induced birational map $\sigma\colon\rmap
X.Y.$ but not necessarily a rational map between $S$ and $T$.

The Sarkisov program has its origin in the birational classification of ruled surfaces.  
A link of type I corresponds to the diagram
\begin{diagram}
\Hz 1.      & =  &  \Hz 1.   \\
 \dTo      &           &  \dTo_{\psi}\\
\pr 2.      &           &  \pr 1.  \\
\dTo^{\phi} & \ldTo &        \\
\text{pt.}   &           & 
\end{diagram}
Note that there are no flops for surfaces so the top horizontal map is always the
identity.  The top vertical arrow on the left is the blow up of a point in $\pr 2.$ and
$\psi$ is the natural map given by the pencil of lines.  A link of type III is the same
diagram, reflected in a vertical line,
\begin{diagram}
\Hz 1.      & =  &  \Hz 1.   \\
\dTo^{\phi} &           &  \dTo \\
\pr 1.      &           &  \pr 2.  \\
 & \rdTo &    \dTo_{\psi}    \\
          &           & \text{pt.}
\end{diagram}

A link of type II corresponds to the classical elementary transformation between ruled
surfaces,
\begin{diagram}
X'    & =  & Y'    \\
\dTo    &  & \dTo    \\
X    & &  Y   \\
\dTo^{\phi}  & &    \dTo_{\psi}    \\
S      &    =       &  T.  \\
\end{diagram}
The birational map $\map X'.X.$ blows up a point in one fibre and the birational map $\map
Y'.Y.$ blows down the old fibre.  Finally a link of type IV corresponds to switching
between the two ways to project $\pr 1.\times \pr 1.$ down to $\pr 1.$,
$$
\begin{diagram}
\pr 1.\times\pr 1.   &   &= &  & \pr 1.\times \pr 1.   \\
\dTo^{\phi} & & & & \dTo_{\psi} \\
\pr 1.  &   & &  & \pr 1. \\
& \rdTo   &      & \ldTo & \\
     &   & \text{pt.}     & &
\end{diagram}
$$
It is a fun exercise to factor the classical Cremona transformation $\sigma\colon\rmap
{\pr 2.}.{\pr 2.}.$, $\map [X:Y:Z].[X^{-1}:Y^{-1}:Z^{-1}].$ into a product of Sarkisov
links.  Indeed one can use the Sarkisov program to give a very clean proof that the
birational automorphism of $\pr 2.$ is generated by this birational map $\sigma$ and
$\pgl(3)$.  More generally the Sarkisov program can sometimes be used to calculate the
birational automorphism group of Mori fibre spaces, especially Fano varieties.  With this
said, note that the following problem seems quite hard:
\begin{question}\label{q_three} What are generators of the birational automorphism group
of $\pr 3.$?
\end{question}

Note that a link of type IV${}_s$ only occurs in dimension four or more.  For an example
of a link of type IV${}_s$ simply take $\rmap S.T.$ to be a flop between threefolds, let
$\map S.R.$ be the base of the flop and let $X=S\times \pr 1.$ and $Y=T\times \pr 1.$ with
the obvious maps down to $S$ and $T$.  It is conceivable that one can factor a link of
type IV${}_s$ into links of type I and III.  However given any positive integer $k$ it is
easy to write down examples of links of type IV which cannot be factored into fewer than
$k$ links of type I, II or III.

Let us now turn to a description of the proof of \eqref{t_sarkisov}.  The proof is based
on the original ideas of the Sarkisov program (as explained by Corti and Reid
\cite{Corti95}; see also \cite{BM97a}).  We are given a birational map $\sigma\colon\rmap
X.Y.$ and the objective is to factor $\sigma$ into a product of Sarkisov links.  In the
original proof one keeps track of some subtle invariants and the idea is to prove:
\begin{itemize}
\item the first Sarkisov link $\sigma_1$ exists,  
\item if one chooses $\sigma_1$ appropriately then the invariants improve, and 
\item the invariants cannot increase infinitely often.
\end{itemize} 

Sarkisov links arise naturally if one plays the $2$-ray game.  If the relative Picard
number is two then there are only two rays to contract and this gives a natural way to
order the steps of the minimal model program.  One interesting feature of the original
proof is that it is a little tricky to prove the existence of the first Sarkisov link,
even if we assume existence and termination of flips.  In the original proof one picks a
linear system on $Y$ and pulls it back to $X$.  There are then three invariants to keep
track of; the singularities of the linear system on $X$, as measured by the canonical
threshold, the number of divisors of log discrepancy one (after rescaling to the canonical
threshold) and the pseudo-effective threshold.  Even for threefolds it is very hard to
establish that these invariants satisfy the ascending chain condition.

Our approach is quite different.  We don't consider any linear systems nor do we try to
keep track of any invariants.  Instead we use one of the main results of \cite{BCHM06},
namely finiteness of ample models for kawamata log terminal pairs $(Z,A+B)$.  Here $A$ is
a fixed ample $\mathbb{Q}$-divisor and $B$ ranges over a finite dimensional affine space
of Weil divisors.  The closure of the set of divisors $B$ with the same ample model is a
disjoint union of finitely many polytopes and the union of all of these polytopes
corresponds to divisors in the effective cone.

Now if the space of Weil divisors spans the N\'eron-Severi group then one can read off
which ample model admits a contraction to another ample model from the combinatorics of
the polytopes, \eqref{t_polytope}.  Further this property is preserved on taking a general
two dimensional slice, \eqref{c_polytope}.  Sarkisov links then correspond to points on
the boundary of the effective cone which are contained in more than two polytopes,
\eqref{t_two}.  To obtain the required factorisation it suffices to simply traverse the
boundary.  In other words instead of considering the closed cone of curves and playing the
$2$-ray game we look at the dual picture of Weil divisors and we work inside a carefully
chosen two dimensional affine space.  The details of the correct choice of underlying
affine space are contained in \S 4.

To illustrate some of these ideas, let us consider an easy case.  Let $S$ be the blow up
of $\pr 2.$ at two points.  Then $S$ is a toric surface and there are five invariant
divisors.  The two exceptional divisors, $E_1$ and $E_2$, the strict transform $L$ of the
line which meets $E_1$ and $E_2$, and finally the strict transform $L_1$ and $L_2$ of two
lines, one of which meets $E_1$ and one of which meets $E_2$.  Then the cone of effective
divisors is spanned by the invariant divisors and according to \cite{HK00} the polytopes
we are looking for are obtained by considering the chamber decomposition given by the
invariant divisors.  Since $L_1=L+E_1$ and $L_2=L+E_2$ the cone of effective divisors is
spanned by $L$, $E_1$ and $E_2$.  Since $-K_S$ is ample, we can pick an ample
$\mathbb{Q}$-divisor $A$ such that $K_S+A \sim_{\mathbb{Q}} 0$ and $K_S+A+E_1+E_2+L$ is
divisorially log terminal.  Let $V$ be the real vector space of Weil divisors spanned by
$E_1$, $E_2$ and $L$.  In this case projecting $\mathcal{L}_A(V)$ from the origin we get

\includegraphics[bbllx=146,bblly=500,bburx=415,bbury=662]{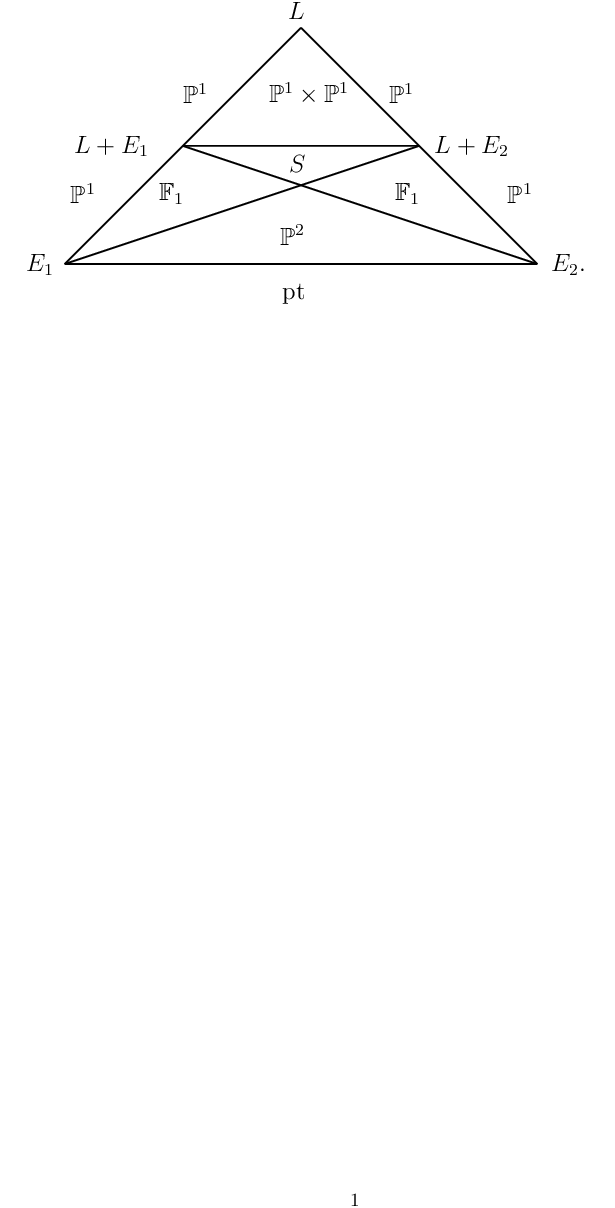}

We have labelled each polytope by the corresponding model.  Imagine going around the
boundary clockwise, starting just before the point corresponding to $L$.  The point $L$
corresponds to a Sarkisov link of type IV${}_m$, the point $L+E_2$ a link of type II, the
point $E_2$ a link of type III, the point $E_1$ a link of type I and the point $L+E_1$
another link of type II.

\section{Notation and conventions}
\label{s-notation}

We work over the field of complex numbers $\mathbb{C}$.  An $\mathbb{R}$-Cartier divisor
$D$ on a variety $X$ is \textit{nef} if $D\cdot C\geq 0$ for any curve $C\subset X$.  We
say that two $\mathbb{R}$-divisors $D_1$, $D_2$ are $\mathbb{R}$-linearly equivalent
($D_1\sim _{\mathbb{R}} D_2$) if $D_1-D_2=\sum r_i(f_i)$ where $r_i\in \mathbb{R}$ and
$f_i$ are rational functions on $X$.  We say that an $\mathbb{R}$-Weil divisor $D$ is
\textit{big} if we may find an ample $\mathbb{R}$-divisor $A$ and an $\mathbb{R}$-divisor
$B\geq 0$, such that $D \sim _{\mathbb{R}} A+B$.  A divisor $D$ is
\textit{pseudo-effective}, if for any ample divisor $A$ and any rational number $\epsilon
>0$, the divisor $D+\epsilon A$ is big.  If $A$ is a $\mathbb Q$-divisor, we say that $A$
is a \textit{general ample $\mathbb{Q}$-divisor} if $A$ is ample and there is a
sufficiently divisible integer $m>0$ such that $mA$ is very ample and $mA\in |mA|$ is very
general.

A \textit{log pair} $(X,\Delta)$ is a normal variety $X$ and an $\mathbb{R}$-Weil divisor
$\Delta\geq 0$ such that $K_X+\Delta$ is $\mathbb{R}$-Cartier.  We say that a log pair
$(X,\Delta)$ is \textit{log smooth}, if $X$ is smooth and the support of $\Delta$ is a
divisor with global normal crossings.  A projective birational morphism $g\colon \map
Y.X.$ is a \textit{log resolution} of the pair $(X,\Delta )$ if $Y$ is smooth and the
strict transform $\Gamma$ of $\Delta$ union the exceptional set $E$ of $g$ is a divisor
with normal crossings support.  If we write
$$
K_Y+\Gamma+E=g^*(K_X +\Delta)+\sum a_iE_i,
$$
where $E=\sum E_i$ is the sum of the exceptional divisors then the log discrepancy
$a(E_i,X,\Delta)$ of $E_i$ is $a_i$.  By convention the log discrepancy of any divisor $B$
which is not exceptional is $1-b$, where $b$ is the coefficient of $B$ in $\Delta$.  The
log discrepancy $a$ is the infinimum of the log discrepancy of any divisor.  

A pair $(X,\Delta)$ is \textit{kawamata log terminal} if $a>0$.  We say that the pair
$(X,\Delta)$ is \textit{log canonical} if $a\geq 0$.  We say that the pair $(X,\Delta)$ is
\textit{terminal} if the log discrepancy of any exceptional divisor is greater than one.

We say that a rational map $\phi\colon\rmap X.Y.$ is a \textit{rational contraction} if
there is a resolution $p\colon\map W.X.$ and $q\colon\map W.Y.$ of $\phi$ such that $p$
and $q$ are contraction morphisms and $p$ is birational.  We say that $\phi$ is a
\textit{birational contraction} if $q$ is in addition birational and every $p$-exceptional
divisor is $q$-exceptional.  If in addition $\phi^{-1}$ is also a birational contraction,
we say that $\phi$ is a \textit{small birational map}.  We refer the reader to
\cite{BCHM06} for the definitions of negative and non-positive rational contractions
and of log terminal models.  

If $\mathcal{C}$ is a closed convex in a finite dimensional real vector space then 
$\mathcal{C}^*$ denotes the dual convex set in the dual real vector space.  

\section{The combinatorics of ample models}

We fix some notation.  $Z$ is a smooth projective variety, $V$ is a finite dimensional
affine subspace of the real vector space $\WDiv_{\mathbb{R}}(Z)$ of Weil divisors on $Z$,
which is defined over the rationals, and $A\geq 0$ is an ample $\mathbb{Q}$-divisor on
$Z$.  We suppose that there is an element $\Theta_0$ of $\mathcal{L}_A(V)$ such that
$K_Z+\Theta_0$ is big and kawamata log terminal.

We recall some definitions and notation from \cite{BCHM06}:
\begin{definition}\label{d_ample} Let $D$ be an $\mathbb{R}$-divisor on $Z$.

We say that $f\colon\rmap Z.X.$ is the \textbf{ample model} of $D$, if $f$ is a rational
contraction, $X$ is a normal projective variety and there is an ample divisor $H$ on $X$
such that if $p\colon\map W.Z.$ and $q\colon\map W.X.$ resolve $f$ and we write
$p^*D\sim_{\mathbb{R}}q^*H+E$, then $E\geq 0$ and for every $B\sim_{\mathbb{R}} p^*D$ if
$B\geq 0$ then $B\geq E$.
\end{definition}

Note that if $f$ is birational then $q_*E=0$.  

\begin{definition}\label{d_cones} Let 
\begin{align*} 
V_A &=\{\,\Theta \,|\, \Theta=A+B, B\in V \,\}, \\
\mathcal{L}_A(V)&=\{\,\Theta=A+B\in V_A \,|\, \text{$K_Z+\Theta$ is log canonical and $B\geq 0$} \,\}, \\
\mathcal{E}_A(V) &=\{\,\Theta\in \mathcal{L}_A(V) \,|\, \text{$K_Z+\Theta$ is pseudo-effective} \,\}.
\end{align*}
Given a rational contraction $f\colon\rmap Z.X.$, define
$$
\mathcal{A}_{A,f}(V)=\{\, \Theta\in \mathcal E_A(V) \,|\, \text{$f$ is the ample model of $(Z,\Theta)$} \,\}.
$$
\end{definition}

In addition, let $\mathcal{C}_{A,f}(V)$ denote the closure of $\mathcal{A}_{A,f}(V)$.  

\begin{theorem}\label{t_polytope} There are finitely many $1\leq i\leq m$ rational 
contractions $f_i\colon\rmap Z.X_i.$ with the following properties:
\begin{enumerate} 
\item $\ds{\{\, \mathcal{A}_i=\mathcal{A}_{A,f_i}\,|\, 1\leq i\leq m \,\}}$ is a partition
of $\mathcal{E}_{A}(V)$.  $\mathcal{A}_i$ is a finite union of interiors of rational
polytopes.  If $f_i$ is birational then $\mathcal{C}_i=\mathcal{C}_{A,f_i}$ is a rational
polytope.
\item If $1\leq i\leq m$ and $1\leq j\leq m$ are two indices such that
$\mathcal{A}_j\cap\mathcal{C}_i\neq\varnothing$ then there is a contraction morphism
$f_{i,j}\colon\map X_i.X_j.$ and a factorisation $f_j=f_{i,j}\circ f_i$.
\end{enumerate}
Now suppose in addition that $V$ spans the N\'eron-Severi group of $Z$.
\begin{enumerate} 
\setcounter{enumi}{2}
\item Pick $1\leq i\leq m$ such that a connected component $\mathcal{C}$ of
$\mathcal{C}_i$ intersects the interior of $\mathcal{L}_A(V)$.  The following are
equivalent
\begin{itemize} 
\item $\mathcal{C}$ spans $V$.
\item If $\Theta\in \mathcal{A}_i\cap \mathcal{C}$ then $f_i$ is a log terminal model of
$K_Z+\Theta$.
\item $f_i$ is birational and $X_i$ is $\mathbb{Q}$-factorial.
\end{itemize}
\item If $1\leq i\leq m$ and $1\leq j \leq m$ are two indices such that $\mathcal{C}_i$
spans $V$ and $\Theta$ is a general point of $\mathcal{A}_j\cap \mathcal{C}_i$ which is
also a point of the interior of $\mathcal{L}_A(V)$ then $\mathcal{C}_i$ and $\ccone
X_i/X_j.^*\times \mathbb{R}^k$ are locally isomorphic in a neighbourhood of $\Theta$, for
some $k\geq 0$.  Further the relative Picard number of $f_{i,j}\colon\map X_i.X_j.$ is
equal to the difference in the dimensions of $\mathcal{C}_i$ and $\mathcal{C}_j\cap
\mathcal{C}_i$.
\end{enumerate} 
\end{theorem}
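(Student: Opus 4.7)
The plan is to deduce (1) and (2) essentially formally from the finiteness of models in \cite{BCHM06}, and then to carry out a local analysis near interior points of the chambers to obtain (3) and (4).

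For (1), I would first stratify $\mathcal{L}_A(V)$ by the finer relation ``same log terminal model'', which by \cite{BCHM06} yields finitely many rational polytopes.  Two classes give the same ample model iff their log terminal models admit a common further ample contraction, so passing to this coarser equivalence expresses each $\mathcal{A}_i$ as a finite union of interiors of rational polytopes.  When $f_i$ is birational, $\mathcal{C}_i$ is the locus where $f_i$ is a weak log canonical model, a rational polytope again by \cite{BCHM06}.  For (2), given $\Theta_n \in \mathcal{A}_i$ converging to $\Theta \in \mathcal{A}_j \cap \mathcal{C}_i$, the ample classes witnessing the $X_i$-model converge to a semi-ample class on $X_i$ whose $\Proj$ is $X_j$, producing the morphism $f_{i,j}\colon X_i \to X_j$ and the factorisation $f_j = f_{i,j} \circ f_i$.

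Now assume $V$ spans $N^1(Z)_{\mathbb{R}}$.  For (3), I would prove the three conditions equivalent cyclically.  Suppose $\mathcal{C}$ spans $V$.  Pick $\Theta$ in the relative interior of $\mathcal{A}_i \cap \mathcal{C}$; small perturbations in every direction of $V$ preserve the ample model $X_i$.  Since $V$ surjects onto $N^1(Z)_{\mathbb{R}}$, these perturbations cover every direction on $X_i$, forcing $f_i$ to be birational (else directions along a positive-dimensional fibre fail to preserve ampleness) and $X_i$ to be $\mathbb{Q}$-factorial (else the ample cone is not full-dimensional).  If $f_i$ is birational and $X_i$ is $\mathbb{Q}$-factorial, then the defining property of ample models together with ampleness of $K_{X_i} + f_{i,*}\Theta$ is exactly the condition for $f_i$ to be a log terminal model.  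Conversely, if $f_i$ is a log terminal model, then openness of ampleness furnishes a full-dimensional neighbourhood of $\Theta$ in $V_A$ inside $\mathcal{A}_i$, so $\mathcal{C}$ must span $V$.

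The heart of the proof, and the main obstacle, is (4).  Take a general $\Theta \in \mathcal{A}_j \cap \mathcal{C}_i$ in the interior of $\mathcal{L}_A(V)$; by (3) and (2), $f_i$ is birational with $X_i$ $\mathbb{Q}$-factorial, and there is a contraction $f_{i,j}\colon X_i \to X_j$.  Let $k$ be the dimension of the kernel of the pushforward $V \to N^1(X_i)_{\mathbb{R}}$; directions in this kernel are numerically trivial on $X_i$ and cannot change the ample model, yielding the $\mathbb{R}^k$ factor.  Modulo this kernel, perturbations remain in $\mathcal{A}_i$ precisely when their image on $X_i$ is $f_{i,j}$-ample, and cross the wall into $\mathcal{A}_j$ when the image becomes $f_{i,j}$-trivial but remains ample on $X_j$; this identifies the local structure of $\mathcal{C}_i$ near $\Theta$ with $\ccone X_i/X_j.^* \times \mathbb{R}^k$.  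The relative Picard number claim then follows: $\mathcal{C}_j \cap \mathcal{C}_i$ corresponds locally to classes pulled back from $X_j$, a subspace of codimension $\rho(X_i/X_j)$ in the linear span of $\mathcal{C}_i$.  The delicate step will be upgrading this matching of dimensions to an honest local isomorphism of cones, which I expect to follow from continuity of the $\Proj$ construction combined with the general position hypothesis on $\Theta$.
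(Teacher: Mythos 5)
Your overall strategy --- (1) from the finiteness results of \cite{BCHM06}, (2) by a limiting argument, (3) and (4) by perturbations exploiting the spanning hypothesis --- is the paper's strategy, but two or three steps carry real gaps.  In (2), the whole content is your unproved assertion that the ample classes on $X_i$ ``converge to a semi-ample class'': a limit of ample classes is a priori only nef.  The paper gets semiampleness by first using finiteness of log terminal models to produce a single birational contraction $f\colon Z\dashrightarrow X$ which is a log terminal model of $K_Z+\Theta_t$ for all $t$ in a half-open segment ending at $\Theta$, and then applying the base point free theorem to $K_X+\Delta_0$, which is nef, kawamata log terminal and big (thanks to the ample summand $A$); the morphism $f_{i,j}$ is the contraction associated to the resulting semiample class $H_0$.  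In (3), your implication ``birational and $\mathbb{Q}$-factorial $\Rightarrow$ log terminal model'' is not ``exactly the condition'': the defining property of the ample model only gives $E\geq 0$, i.e.\ that $f_i$ is a weak log canonical model (non-positivity), whereas a log terminal model requires \emph{strict} negativity of discrepancies on every contracted divisor.  The paper orders the cycle differently, proving ``spans $\Rightarrow$ log terminal model'' directly (any log terminal model of $K_Z+\Theta$, $\Theta$ interior, occurs as some $f_j$ with $\Theta\in\mathcal{C}_j$ by \cite{BCHM06}, whence $i=j$), after which the other two implications are easy.

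The most serious issue is that in (4) you explicitly defer ``the delicate step,'' and that step is where the work lies; ``continuity of the $\operatorname{Proj}$ construction'' will not produce it.  Two ingredients are needed.  First, the generality of $\Theta$ in $\mathcal{A}_j\cap\mathcal{C}_i$ must be used to upgrade $f_i$ from a weak log canonical model to a genuine log terminal model of $K_Z+\Theta$: the paper chooses $B_0\in V$ numerically equivalent to the sum $E_0$ of the $f_i$-exceptional divisors, observes that $f_i$ is a log terminal model of $K_Z+\Theta+\delta E_0$ and hence of the numerically equivalent $K_Z+\Theta+\delta B_0$, and lets $\delta\to 0$.  Without this negativity, membership of a nearby $\Xi$ in $\mathcal{C}_i$ is \emph{not} equivalent to nefness of $K_{X_i}+f_{i*}\Xi$, and your wall-crossing description of $\mathcal{C}_i$ near $\Theta$ has no justification.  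Second, the identification of the local nef cone with $\overline{\operatorname{NE}}(X_i/X_j)^*\times\mathbb{R}^l$ and the computation of $\rho(X_i/X_j)$ require knowing that $\overline{\operatorname{NE}}(X_i/X_j)$ is a rational polyhedral cone of full dimension $\rho(X_i/X_j)$; the paper obtains this by rewriting $K_{X_i}+f_{i*}\Theta\sim_{\mathbb{R}}K_{X_i}+A'+B'$ with $A'$ ample, so that $f_{i,j}$ is a Fano fibration for the pair $(X_i,B')$, and then invoking the cone theorem.  Your dimension count of ``classes pulled back from $X_j$'' matches the answer but does not substitute for these two arguments.
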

\begin{proof} (1) is proved in \cite{BCHM06}.  

Pick $\Theta\in\mathcal{A}_j\cap\mathcal{C}_i$ and $\Theta'\in\mathcal{A}_i$ so that 
$$
\Theta_t=\Theta+t(\Theta'-\Theta)\in \mathcal{A}_i \qquad \text{if} \qquad t\in (0,1].
$$
By finiteness of log terminal models, cf. \cite{BCHM06}, we may find a positive constant
$\delta>0$ and a birational contraction $f\colon\rmap Z.X.$ which is a log terminal model
of $K_Z+\Theta_t$ for $t\in (0,\delta]$.  Replacing $\Theta'=\Theta_1$ by
$\Theta_{\delta}$ we may assume that $\delta=1$.  If we set
$$
\Delta_t=f_*\Theta_t,
$$
then $K_X+\Delta_t$ is kawamata log terminal and nef, and $f$ is $K_Z+\Theta_t$
non-positive for $t\in [0,1]$.  As $\Delta_t$ is big the base point free theorem implies
that $K_X+\Delta_t$ is semiample and so there is an induced contraction morphism
$g_i\colon\map X.X_i.$ together with ample divisors $H_{1/2}$ and $H_1$ such that
$$
K_X+\Delta_{1/2}=g_i^*H_{1/2} \qquad \text{and} \qquad K_X+\Delta_1=g_i^*H_1.
$$
If we set 
$$
H_t=(2t-1)H_1+2(1-t)H_{1/2},
$$
then 
\begin{align*} 
K_X+\Delta_t &= (2t-1)(K_X+\Delta_1)+2(1-t)(K_X+\Delta_{1/2}) \\
             &= (2t-1)g_i^*H_1+2(1-t)g_i^*H_{1/2} \\
             &= g_i^*H_t,
\end{align*} 
for all $t\in [0,1]$.  As $K_X+\Delta_0$ is semiample, it follows that $H_0$ is semiample
and the associated contraction $f_{i,j}\colon\map X_i.X_j.$ is the required morphism.  
This is (2).  

Now suppose that $V$ spans the N\'eron-Severi group of $Z$.  Suppose that $\mathcal{C}$
spans $V$.  Pick $\Theta$ in the interior of $\mathcal{C}\cap \mathcal{A}_i$.  Let
$f\colon\rmap Z.X.$ be a log terminal model of $K_Z+\Theta$.  It is proved in
\cite{BCHM06} that $f=f_j$ for some index $1\leq j\leq m$ and that $\Theta\in
\mathcal{C}_j$.  But then $\mathcal{A}_i\cap \mathcal{A}_j\neq\varnothing$ so 
that $i=j$.  

If $f_i$ is a log terminal model of $K_Z+\Theta$ then $f_i$ is birational and $X_i$ is
$\mathbb{Q}$-factorial.

Finally suppose that $f_i$ is birational and $X_i$ is $\mathbb{Q}$-factorial.  Fix
$\Theta\in \mathcal{A}_i$.  Pick any divisor $B\in V$ such that $-B$ is ample
$K_{X_i}+f_{i*}(\Theta+B)$ is ample and $\Theta+B\in \mathcal{L}_A(V)$.  Then $f_i$ is
$(K_Z+\Theta+B)$-negative and so $\Theta+B\in \mathcal{A}_i$.  But then $\mathcal{C}_i$
spans $V$.  This is (3).

We now prove (4).  Let $f=f_i$ and $X=X_i$.  As $\mathcal{C}_i$ spans $V$, (3) implies
that $f$ is birational and $X$ is $\mathbb{Q}$-factorial so that $f$ is a
$\mathbb{Q}$-factorial weak log canonical model of $K_Z+\Theta$.  Suppose that $\llist
E.k.$ are the divisors contracted by $f$.  Pick $B_i\in V$ numerically equivalent to
$E_i$.  If we let $E_0=\sum E_i$ and $B_0=\sum B_i$ then $E_0$ and $B_0$ are numerically
equivalent.  As $\Theta$ belongs to the interior of $\mathcal{L}_A(V)$ we may find
$\delta>0$ such that $K_Z+\Theta+\delta E_0$ and $K_Z+\Theta+\delta B_0$ are both kawamata
log terminal.  Then $f$ is $(K_Z+\Theta+\delta E_0)$-negative and so $f$ is a log terminal
model of $K_Z+\Theta+\delta E_0$ and $f_j$ is the ample model of $K_Z+\Theta+\delta E_0$.
But then $f$ is also a log terminal model of $K_Z+\Theta+\delta B_0$ and $f_j$ is also the
ample model of $K_Z+\Theta+\delta B_0$.  In particular $\Theta+\delta B_0\in
\mathcal{A}_j\cap \mathcal{C}_i$.  As we are supposing that $\Theta$ is general in
$\mathcal{A}_j\cap \mathcal{C}_i$, in fact $f$ must be a log terminal model of
$K_Z+\Theta$.  In particular $f$ is $(K_Z+\Theta)$-negative.

Pick $\epsilon>0$ such that if $\Xi\in V$ and $\|\Xi-\Theta\|<\epsilon$ then $\Xi$ belongs
to the interior of $\mathcal{L}_A(V)$ and $f$ is $(K_Z+\Xi)$-negative.  Then the condition
that $\Xi\in \mathcal{C}_i$ is simply the condition that $K_X+\Delta=f_*(K_Z+\Xi)$ is nef.
Let $W$ be the affine suspace of $\WDiv_{\mathbb{R}}(X)$ given by pushing forward the
elements of $V$ and let
$$
\mathcal{N}=\{\, \Delta\in W \,|\, \text{$K_X+\Delta$ is nef} \,\}.  
$$
Given $(\llist a.k.)\in \mathbb{R}^k$ let $B=\sum a_iB_i$ and $E=\sum a_iE_i$.  If
$\|B\|<\epsilon$ then, as $\Xi+B$ is numerically equivalent to $\Xi+E$, $K_X+\Delta\in
\mathcal{N}$ if and only if $K_X+\Delta+f_*B\in \mathcal{N}$.  In particular
$\mathcal{C}_i$ is locally isomorphic to $\mathcal{N}\times \mathbb{R}^k$.

But since $f_j$ is the ample model of $K_Z+\Theta$, in fact we can choose $\epsilon$
sufficiently small so that $K_X+\Delta$ is nef if and only if $K_X+\Delta$ is nef over
$X_j$, see \S 3 of \cite{BCHM06}.  There is a surjective affine linear map from 
$W$ to the space of Weil divisors on $X$ modulo numerical equivalence over $X_j$ 
and this induces an isomorphism 
$$
\mathcal{N}\simeq \ccone X/X_j.^*\times \mathbb{R}^l,
$$
in a neighbourhood of $f_*\Theta$.

Note that $K_X+f_*\Theta$ is numerically trivial over $X_j$.  As $f_*\Theta$ is big and
$K_X+f_*\Theta$ is kawamata log terminal we may find an ample $\mathbb{Q}$-divisor $A'$
and a divisor $B'\geq 0$ such that
$$
K_X+A'+B' \sim_{\mathbb{R}} K_X+f_*\Theta,
$$
is kawamata log terminal.  But then
$$
-(K_X+B') \sim_{\mathbb{R}} -(K_X+\Delta')+A',
$$ 
is ample over $X_j$.  Hence $f_{ij}\colon\map X.X_j.$ is a Fano fibration and 
so by the cone theorem 
$$
\rho(X_i/X_j)=\dim \mathcal{N}.
$$
This is (4).  \end{proof}

\begin{corollary}\label{c_polytope} If $V$ spans the N\'eron-Severi group of $Z$ then
there is a Zariski dense open subset $U$ of the Grassmannian $G(\alpha,V)$ of real affine
subspaces of dimension $\alpha$ such that if $[W]\in U$ and it is defined over the
rationals then $W$ satisfies (1-4) of \eqref{t_polytope}.
\end{corollary}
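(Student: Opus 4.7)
The strategy is to show that for generic rational $W \in G(\alpha,V)$, the four conclusions of \eqref{t_polytope} transfer from $V$ to $W$ by restriction. By \eqref{t_polytope}(1), $\mathcal{E}_A(V)$ is partitioned by finitely many cells $\mathcal{A}_i$ whose closures $\mathcal{C}_i$ are finite unions of rational polytopes; together with the pairwise intersections $\mathcal{C}_i \cap \mathcal{C}_j$, this gives a finite collection of rational polyhedral subsets of $V$. Define $U \subseteq G(\alpha,V)$ to be the locus of $[W]$ such that (i) $W$ meets the interior of $\mathcal{L}_A(V)$; (ii) $W$ intersects each polytope in the above finite collection transversally, i.e., with the expected codimension inside its affine span; and (iii) $W$ spans $NS(Z)_{\mathbb{R}}$ (vacuous when $\alpha < \rho(Z)$, in which case (3) and (4) have no content for $W$). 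Each condition is Zariski open and nonempty, so $U$ is Zariski dense and open in $G(\alpha,V)$, and its rational members are also dense.

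For such a rational $W$, properties (1) and (2) transfer directly. The family $\{\mathcal{A}_i \cap W\}$, retaining only its nonempty members, partitions $\mathcal{E}_A(W) = \mathcal{E}_A(V) \cap W$; transversality ensures each cell is a finite union of interiors of rational polytopes, and $\mathcal{C}_i \cap W$ is itself a rational polytope whenever $f_i$ is birational. For (2), if $\mathcal{A}_j \cap \mathcal{C}_i \cap W \neq \varnothing$ then $\mathcal{A}_j \cap \mathcal{C}_i \neq \varnothing$ in $V$, so the contraction morphism $f_{i,j}\colon X_i \to X_j$ and the factorisation $f_j = f_{i,j} \circ f_i$ come directly from (2) for $V$.

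For (3) and (4) we assume $W$ spans $NS(Z)$. Transversality implies that a connected component of $\mathcal{C}_i \cap W$ spans $W$ if and only if the corresponding component of $\mathcal{C}_i$ spans $V$; thus the three-way equivalence in (3) for $W$ is a formal consequence of the equivalence in (3) for $V$. For (4), transversality preserves codimension: both $\dim(\mathcal{C}_i \cap W)$ and $\dim(\mathcal{C}_i \cap \mathcal{C}_j \cap W)$ drop by $\dim V - \alpha$ upon restriction, so their difference still equals $\rho(X_i/X_j)$. The local isomorphism near a general $\Theta \in \mathcal{A}_j \cap \mathcal{C}_i \cap W$ is obtained by re-running the argument in the proof of \eqref{t_polytope}(4) inside $W$: since $W$ spans $NS(Z)$, every contracted divisor $E_i$ admits a numerical representative $B_i \in W$, and the perturbation-and-pushforward analysis then identifies a local model $\ccone X_i/X_j.^* \times \mathbb{R}^{k'}$ with $k' = k - (\dim V - \alpha)$.

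The main obstacle is the local isomorphism in (4) under restriction. The crux is that the splitting produced in the proof of \eqref{t_polytope}(4) depends only on two intrinsic features --- an ambient rational affine space that spans $NS(Z)$, so that contracted divisors have numerical representatives, and the nef cone of $X_i$ over $X_j$ --- and both survive transverse rational slicing once $W$ spans $NS(Z)$; only the Euclidean factor is shrunk by $\dim V - \alpha$.
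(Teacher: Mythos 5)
Your construction of $U$ by generic transversality is essentially the paper's (the paper takes $U$ to be those $W$ containing no face of any $\mathcal{C}_i$ or of $\mathcal{L}_A(V)$), and your transfer of (1) and (2) to the slice is fine. The genuine problem is the parenthetical in your condition (iii): you declare (3) and (4) ``vacuous when $\alpha < \rho(Z)$'' and accordingly verify them only when $W$ itself spans the N\'eron-Severi group. That reading empties the corollary of its content exactly where it is used: in \eqref{l_perturb} the corollary is invoked with $\alpha=2$ to produce a two-dimensional $W$ ``satisfying (1-4)'', and \eqref{l_easy-cases} and \eqref{t_two} then rely on the conclusions of (3) and (4) for that two-dimensional $W$ (for instance, that a two-dimensional $\mathcal{C}_{A,f}$ forces $f$ birational with $X$ $\mathbb{Q}$-factorial, and that the drop in dimension across a wall computes the relative Picard number). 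The intended meaning, confirmed by the paper's own proof (``(1-4) hold for $V$ and so (3) and (4) clearly hold for $W$''), is that the \emph{conclusions} of (3) and (4) descend to the sliced decomposition, with ``spans $V$'' read as ``spans $W$''; the hypothesis of spanning the N\'eron-Severi group is carried by the ambient $V$, not re-imposed on $W$.

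The repair for (3) is essentially already in your text: your observation that transversality makes ``a component of $\mathcal{C}_i\cap W$ spans $W$'' equivalent to ``the corresponding component of $\mathcal{C}_i$ spans $V$'' yields (3) for $W$ with no spanning hypothesis on $W$, since the other two conditions in the three-way equivalence make no reference to the ambient affine space. For (4), however, you cannot ``re-run the argument inside $W$'': choosing $B_i$ in (the direction of) $W$ numerically equivalent to each contracted divisor $E_i$ is precisely what fails when $W$ does not span the N\'eron-Severi group. Instead one must deduce the statement for $W$ from the statement already proved for $V$: near a general point of $\mathcal{A}_j\cap\mathcal{C}_i\cap W$ the polytope $\mathcal{C}_i$ is locally $\ccone X_i/X_j.^*\times\mathbb{R}^k$, and a generic affine slice through a general point of the wall again has this product form with only the Euclidean factor cut down, while transversality preserves the difference of dimensions and hence its identification with $\rho(X_i/X_j)$.
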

\begin{proof} Let $U\subset G(\alpha,V)$ be the set of real affine subspaces $W$ of $V$ of
dimension $\alpha$, which contain no face of any $\mathcal{C}_i$ or $\mathcal{L}_A(V)$.
In particular the interior of $\mathcal{L}_A(W)$ is contained in the interior of
$\mathcal{L}_A(V)$.  \eqref{t_polytope} implies that (1-2) always hold for $W$ and (1-4)
hold for $V$ and so (3) and (4) clearly hold for $W\in U$.  \end{proof}

From now on in this section we assume that $V$ has dimension two and satisfies (1-4) of
\eqref{t_polytope}.

\begin{lemma}\label{l_easy-cases} Let $f\colon\rmap Z.X.$ and $g\colon\rmap Z.Y.$ 
be two rational contractions such that $\mathcal{C}_{A,f}$ is two dimensional and
$\mathcal{O}=\mathcal{C}_{A,f}\cap \mathcal{C}_{A,g}$ is one dimensional.  Assume that
$\rho(X)\geq \rho(Y)$ and that $\mathcal{O}$ is not contained in the boundary of
$\mathcal{L}_A(V)$.  Let $\Theta$ be an interior point of $\mathcal{O}$ and let
$\Delta=f_*\Theta$.

Then there is a rational contraction $\pi\colon\rmap X.Y.$ which factors $g=\pi\circ f$
and either
\begin{enumerate} 
\item $\rho(X)=\rho(Y)+1$ and $\pi$ is a $(K_X+\Delta)$-trivial morphism, in which case,
either
\begin{enumerate}
\item $\pi$ is birational and $\mathcal{O}$ is not contained in the boundary of
$\mathcal{E}_A(V)$, in which case, either
\begin{enumerate}
\item $\pi$ is a divisorial contraction and $\mathcal{O}\neq\mathcal{C}_{A,g}$, or
\item $\pi$ is a small contraction and $\mathcal{O}=\mathcal{C}_{A,g}$, or
\end{enumerate}
\item $\pi$ is a Mori fibre space and $\mathcal{O}=\mathcal{C}_{A,g}$ is contained in the
boundary of $\mathcal{E}_A(V)$, or
\end{enumerate} 
\item $\rho(X)=\rho(Y)$, in which case, $\pi$ is a $(K_X+\Delta)$-flop and
$\mathcal{O}\neq \mathcal{C}_{A,g}$ is not contained in the boundary of
$\mathcal{E}_A(V)$.
\end{enumerate} 
\end{lemma}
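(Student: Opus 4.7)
The plan is to use the combinatorial structure from \eqref{t_polytope} to read off both the existence of $\pi$ and its type. Since $\mathcal{C}_{A,f}$ is two-dimensional it spans $V$, so by \eqref{t_polytope}(3) the map $f$ is birational and $X$ is $\mathbb{Q}$-factorial. Let $k$ be the unique index with $\Theta\in\mathcal{A}_k$, provided by the partition in \eqref{t_polytope}(1). Then $\Theta\in\mathcal{A}_k\cap\mathcal{C}_{A,f}$ and $\Theta\in\mathcal{A}_k\cap\mathcal{C}_{A,g}$ (the latter since $\Theta\in\mathcal{O}\subset\mathcal{C}_{A,g}$), so two applications of \eqref{t_polytope}(2) yield contraction morphisms $\pi_f\colon X\to X_k$ and $\pi_g\colon Y\to X_k$ factoring $f_k=\pi_f\circ f=\pi_g\circ g$. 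Setting $\pi=\pi_g^{-1}\circ\pi_f$ gives a rational contraction $X\dashrightarrow Y$ with $g=\pi\circ f$. From the ample model description, $K_X+\Delta\sim_{\mathbb{R}}\pi_f^*H$ and $K_Y+\Gamma\sim_{\mathbb{R}}\pi_g^*H$ for a common ample divisor $H$ on $X_k$ (with $\Gamma=g_*\Theta$), so $\pi$ is automatically $(K_X+\Delta)$-trivial.

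I would then split on whether $k=g$. If $k=g$ then $\pi=\pi_f\colon X\to Y$ is already a morphism, and the dimension formula of \eqref{t_polytope}(4) gives $\rho(X/Y)=\dim\mathcal{C}_{A,f}-\dim(\mathcal{C}_{A,f}\cap\mathcal{C}_{A,g})=2-1=1$, so $\rho(X)=\rho(Y)+1$ and we are in Case~(1). The three subcases are then read off $\dim\mathcal{C}_{A,g}$ together with the birationality of $g$: if $\dim\mathcal{C}_{A,g}=2$, then \eqref{t_polytope}(3) forces $g$ to be birational and $Y$ to be $\mathbb{Q}$-factorial, so $\pi$ must be a divisorial contraction, giving~(i). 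If $\mathcal{C}_{A,g}=\mathcal{O}$ but $g$ is still birational, then $Y$ is not $\mathbb{Q}$-factorial, hence $\pi$ is a small contraction, and the flip of $\pi$ provides a further 2-dimensional chamber on the far side of $\mathcal{O}$ consisting of pseudo-effective classes, so $\mathcal{O}$ is not in the boundary of $\mathcal{E}_A(V)$, giving~(ii). If $\mathcal{C}_{A,g}=\mathcal{O}$ and $g$ is not birational, then $\pi$ is a Mori fibre space; because $-(K_X+\Delta)$ is $\pi$-ample, perturbing $\Theta$ off $\mathcal{O}$ away from $\mathcal{C}_{A,f}$ produces a non-pseudo-effective class, so $\mathcal{O}$ lies on the boundary of $\mathcal{E}_A(V)$, giving~(b).

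If instead $k\neq g$, then $\Theta\notin\mathcal{A}_g$, so $\mathcal{A}_g$ does not meet the relative interior of $\mathcal{O}$; hence $\mathcal{C}_{A,g}$ is two-dimensional and meets $\mathcal{C}_{A,f}$ only along $\mathcal{O}$. By \eqref{t_polytope}(3), $g$ is birational and $Y$ is $\mathbb{Q}$-factorial, while $\mathcal{C}_k$ coincides with $\mathcal{O}$ near $\Theta$ and is one-dimensional, so $X_k$ is not $\mathbb{Q}$-factorial. Applying \eqref{t_polytope}(4) to both $\pi_f$ and $\pi_g$ gives $\rho(X/X_k)=\rho(Y/X_k)=1$; since $X$ and $Y$ are $\mathbb{Q}$-factorial while $X_k$ is not, neither map can be divisorial, so both are small. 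Therefore $\pi=\pi_g^{-1}\circ\pi_f$ is an isomorphism in codimension one, and since $K_X+\Delta$ and $K_Y+\Gamma$ both pull back to the same $\mathbb{R}$-divisor on a common resolution, $\pi$ is a $(K_X+\Delta)$-flop with $\rho(X)=\rho(X_k)+1=\rho(Y)$. Finally $\mathcal{C}_{A,g}$ extends past $\mathcal{O}$, so nearby classes remain pseudo-effective and $\mathcal{O}$ is not in the boundary of $\mathcal{E}_A(V)$; this is Case~(2).

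The main obstacle is the dictionary between the combinatorial data (dimension of $\mathcal{C}_{A,g}$ and whether $\mathcal{O}$ lies in the boundary of $\mathcal{E}_A(V)$) and the birational type of $\pi$. The most delicate distinction is between the small contraction of~(1)(a)(ii) and the Mori fibre space of~(1)(b): both have $\mathcal{C}_{A,g}=\mathcal{O}$, and the key difference is whether the wall $\mathcal{O}$ admits a flip that extends the chamber decomposition across it, or instead bounds the pseudo-effective cone $\mathcal{E}_A(V)$.
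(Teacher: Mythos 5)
Your argument is correct and follows essentially the same route as the paper: identify the ample model $W$ of $K_Z+\Theta$, obtain contraction morphisms $X\to W$ and $Y\to W$ of relative Picard number at most one via \eqref{t_polytope}, and read off the type of $\pi$ from the $\mathbb{Q}$-factoriality of the targets and the dimensions of the polytopes (your dichotomy ``$W=Y$ or not'' is equivalent to the paper's ``$\rho(X)=\rho(Y)+1$ or $\rho(X)=\rho(Y)$''). One small slip: in case (1)(b) the divisor $K_X+\Delta$ is $\pi$-trivial, not $\pi$-antiample; what you want is that perturbing $\Theta$ to the far side of $\mathcal{O}$ makes the pushforward negative on curves in the fibres of $\pi$, which is why those classes fail to be pseudo-effective.
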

\begin{proof} By assumption $f$ is birational and $X$ is $\mathbb{Q}$-factorial.  Let
$h\colon\rmap Z.W.$ be the ample model corresponding to $K_Z+\Theta$.  Since $\Theta$ is
not a point of the boundary of $\mathcal{L}_A(V)$ if $\Theta$ belongs to the boundary of
$\mathcal{E}_A(V)$ then $K_Z+\Theta$ is not big and so $h$ is not birational.  As
$\mathcal{O}$ is a subset of both $\mathcal{C}_{A,f}$ and $\mathcal{C}_{A,g}$ there are
morphisms $p\colon\map X.W.$ and $q\colon\map Y.W.$ of relative Picard number at most one.
There are therefore only two possibilities:
\begin{enumerate} 
\item $\rho(X)=\rho(Y)+1$, or 
\item $\rho(X)=\rho(Y)$.
\end{enumerate} 

Suppose we are in case (1).  Then $q$ is the identity and $\pi=p\colon\map X.Y.$ is a
contraction morphism such that $g=\pi\circ f$.  Suppose that $\pi$ is birational.  Then
$h$ is birational and $\mathcal{O}$ is not contained in the boundary of
$\mathcal{E}_A(V)$.  If $\pi$ is divisorial then $Y$ is $\mathbb{Q}$-factorial and so
$\mathcal{O}\neq\mathcal{C}_{A,g}$.  If $\pi$ is a small contraction then $Y$ is not
$\mathbb{Q}$-factorial and so $\mathcal{C}_{A,g}=\mathcal{O}$ is one dimensional.  If
$\pi$ is a Mori fibre space then $\mathcal{O}$ is contained in the boundary of
$\mathcal{E}_A(V)$ and $\mathcal{O}=\mathcal{C}_{A,g}$.

Now suppose we are in case (2).  By what we have already proved $\rho(X/W)=\rho(Y/W)=1$.
$p$ and $q$ are not divisorial contractions as $\mathcal{O}$ is one dimensional.  $p$ and
$q$ are not Mori fibre spaces as $\mathcal{O}$ cannot be contained in the boundary of
$\mathcal{E}_A(V)$.  Hence $p$ and $q$ are small and the rest is clear.  \end{proof}

\begin{lemma}\label{l_negative} Let $f\colon\rmap W.X.$ be a birational contraction between 
projective $\mathbb{Q}$-factorial varieties.  Suppose that $(W,\Theta)$ and $(W,\Phi)$
are both kawamata log terminal.  

If $f$ is the ample model of $K_W+\Theta$ and $\Theta-\Phi$ is ample then $f$ is the
result of running the $(K_W+\Phi)$-MMP.
\end{lemma}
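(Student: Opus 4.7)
The plan is to realize $f$ as a $(K_W+\Theta)$-MMP with scaling of $A := \Theta-\Phi$, and then to verify that each individual step of this MMP is simultaneously a step of the $(K_W+\Phi)$-MMP.

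First I would exploit the decomposition $\Theta = \Phi + A$: since $A$ is ample by hypothesis and $\Phi \geq 0$, the divisor $\Theta$ is big. Combined with $(W,\Theta)$ kawamata log terminal, $f$ birational, and $X$ $\mathbb{Q}$-factorial, the ample model $f$ is in fact a log terminal model of $K_W+\Theta$ with $K_X+f_*\Theta$ ample. Because $K_X+f_*\Theta$ is strictly positive on every curve class, no non-trivial flops are possible at $X$, and so by \eqref{t_minimal} the log terminal model of $(W,\Theta)$ is unique: any $\mathbb{Q}$-factorial weak log canonical model of $(W,\Theta)$ must be isomorphic to $X$.

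Next I would invoke \cite{BCHM06} to run the $(K_W+\Theta)$-MMP with scaling of $A$. Since $(W,\Theta)$ is kawamata log terminal with $\Theta$ big, this MMP terminates, producing a sequence of divisorial contractions and flips
\[
W = W_0 \dasharrow W_1 \dasharrow \cdots \dasharrow W_n.
\]
By the uniqueness above, $W_n \cong X$, and since the composition is itself a rational contraction from $W$ to the $\mathbb{Q}$-factorial model $X$ with ample $K + $ pushforward, the composition is an ample model of $(W,\Theta)$ and hence equals $f$. Let $\Theta_i,\Phi_i,A_i$ denote the strict transforms of $\Theta,\Phi,A$ on $W_i$. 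At the $i$-th step, the contracted extremal ray $R_i$ is both $(K_{W_i}+\Theta_i)$-negative and $(K_{W_i}+\Theta_i+\lambda_i A_i)$-trivial for the nef threshold $\lambda_i \geq 0$. The triviality gives $(K_{W_i}+\Theta_i)\cdot R_i = -\lambda_i (A_i\cdot R_i)$, and combined with $(K_{W_i}+\Theta_i)\cdot R_i < 0$ this forces both $\lambda_i > 0$ and $A_i \cdot R_i > 0$, even though $A_i$ itself need not be ample on $W_i$. Consequently
\[
(K_{W_i}+\Phi_i)\cdot R_i = (K_{W_i}+\Theta_i)\cdot R_i - A_i\cdot R_i < 0,
\]
so $R_i$ is also $(K_{W_i}+\Phi_i)$-negative, and the $i$-th step is a valid step of the $(K_W+\Phi)$-MMP.

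Concatenating the steps shows that $f$ is realized as a sequence of divisorial contractions and flips for $(K_W+\Phi)$, which is precisely the assertion that $f$ is the result of running the $(K_W+\Phi)$-MMP. The main obstacle I anticipate is the bookkeeping argument that the chosen MMP with scaling composes to exactly $f$, rather than to some other birational contraction with target $X$; this is handled by uniqueness of the ample model of $(W,\Theta)$, combined with the fact that any log terminal model whose $K + $ pushforward is ample coincides with the ample model as a rational contraction.
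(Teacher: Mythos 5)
Your argument is correct, and it reaches the paper's conclusion by running essentially the same MMP read from the opposite end of the segment joining $\Phi$ to $\Theta$. The paper writes $\Theta-\Phi\sim_{\mathbb{R}}tH$ for an ample divisor $H$ with $K_W+\Phi+H$ kawamata log terminal and ample and some $t<1$, runs the $(K_W+\Phi)$-MMP with scaling of $H$, and notes that when the scaling parameter reaches a value $s$ slightly below $t$ the model obtained is the unique log terminal model of $K_W+\Phi+sH$, namely $X$ via $f$; the perturbation from $t$ to $s<t$ is what guarantees that $f$ is $(K_W+\Phi+sH)$-negative rather than merely non-positive, and since every step of an MMP with scaling is by definition a $(K_W+\Phi)$-negative step, no further verification is required. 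You instead run the $(K_W+\Theta)$-MMP with scaling of $A=\Theta-\Phi$, identify its output with $X$ by uniqueness, and then check directly that each contracted ray $R_i$ satisfies $(K_{W_i}+\Phi_i)\cdot R_i=-(\lambda_i+1)\,A_i\cdot R_i<0$. This computation is right, and what it buys you is a proof that avoids the paper's perturbation; what it costs you is the explicit step-by-step check, including the point you leave implicit that the $(K_{W_i}+\Theta_i)$-flip is also the $(K_{W_i}+\Phi_i)$-flip (this does follow, since $\rho(W_{i+1}/V_i)=1$ and $-A_{i+1}$ is positive on the flipped curves). Both proofs ultimately rest on the same standard facts from \cite{BCHM06}: that the MMP with scaling terminates for a big kawamata log terminal pair, and that a birational ample model with $\mathbb{Q}$-factorial target is the unique log terminal model when the pushed-forward log divisor is ample --- you justify the latter via \eqref{t_minimal}, which is a legitimate alternative to citing it directly.
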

\begin{proof} By assumption we may find an ample divisor $H$ on $W$ such that $K_W+\Phi+H$
is kawamata log terminal and ample and a positive real number $t<1$ such that $tH
\sim_{\mathbb{R}} \Theta-\Phi$.  Note that $f$ is the ample model of $K_W+\Phi+tH$.  Pick
any $s<t$ sufficiently close to $t$ so that $f$ is $(K_W+\Phi+sH)$-negative and yet $f$ is
still the ample model of $K_W+\Phi+sH$.  Then $f$ is the unique log terminal model of
$K_W+\Phi+sH$.  In particular if we run the $(K_W+\Phi)$-MMP with scaling of $H$ then,
when the value of the scalar is $s$, the induced rational map is $f$.  \end{proof}

We now adopt some more notation for the rest of this section.  Let $\Theta=A+B$ be a point
of the boundary of $\mathcal{E}_A(V)$ in the interior of $\mathcal{L}_A(V)$.  Enumerate
$\llist \mathcal{T}.k.$ the polytopes $\mathcal{C}_i$ of dimension two which contain
$\Theta$.  Possibly re-ordering we may assume that the intersections $\mathcal{O}_0$ and
$\mathcal{O}_k$ of $\mathcal{T}_1$ and $\mathcal{T}_k$ with the boundary of
$\mathcal{E}_A(V)$ and $\mathcal{O}_i=\mathcal{T}_i\cap \mathcal{T}_{i+1}$ are all one
dimensional.  Let $f_i\colon\rmap Z.X_i.$ be the rational contractions associated to
$\mathcal{T}_i$ and $g_i\colon\rmap Z.S_i.$ be the rational contractions associated to
$\mathcal{O}_i$.  Set $f=f_1\colon\rmap Z.X.$, $g=f_k\colon\rmap Z.Y.$, $X'=X_2$,
$Y'=X_{k-1}$.  Let $\phi\colon\map X.S=S_0.$, $\psi\colon\map Y.T=S_k.$ be the induced
morphisms and let $\rmap Z.R.$ be the ample model of $K_Z+\Theta$.

\includegraphics[bbllx=90,bblly=550,bburx=487,bbury=666]{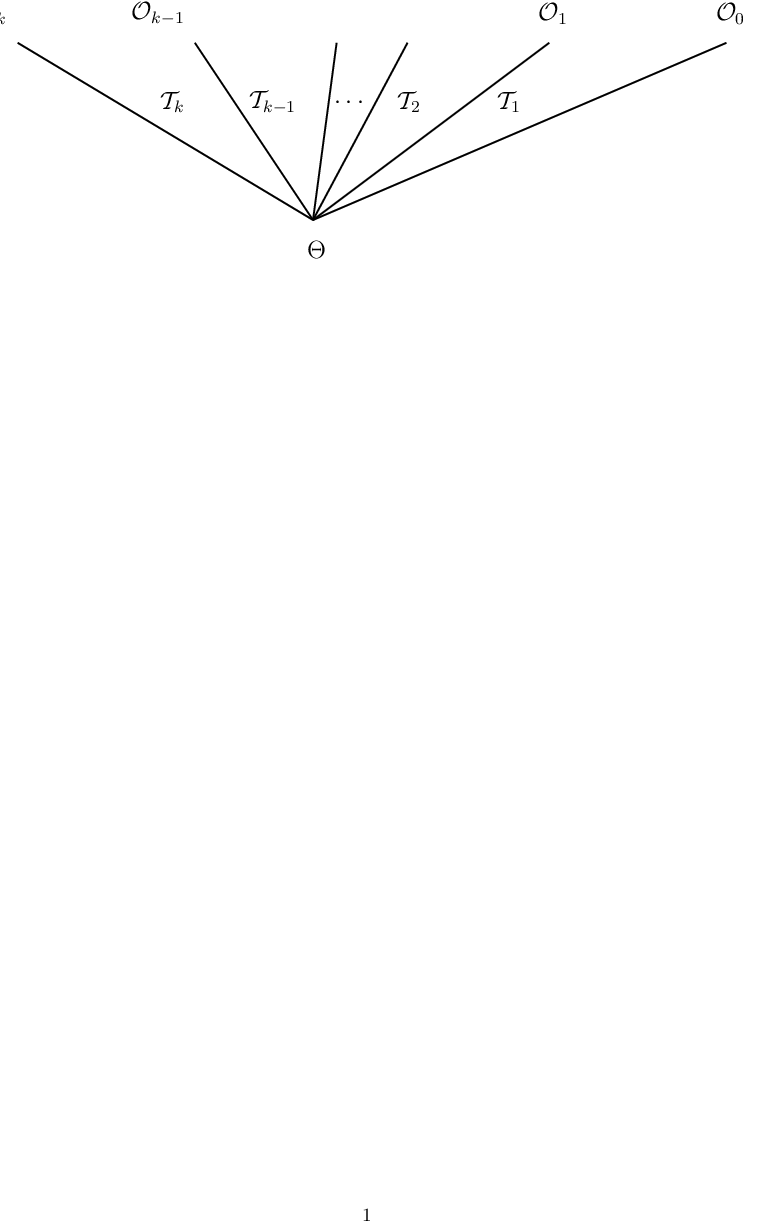}

\begin{theorem}\label{t_two} Suppose $\Phi$ is any divisor such that $K_Z+\Phi$ is
kawamata log terminal and $\Theta-\Phi$ is ample.

Then $\phi$ and $\psi$ are two Mori fibre spaces which are outputs of the $(K_Z+\Phi)$-MMP
which are connected by a Sarkisov link if $\Theta$ is contained in more than two
polytopes.  
\end{theorem}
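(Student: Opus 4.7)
The plan is to establish three things in order: first, that $\phi$ and $\psi$ are Mori fibre spaces; second, that they arise as outputs of running the $(K_Z+\Phi)$-MMP; and third, that the chain $X=X_1,X_2,\ldots,X_k=Y$ assembles into one of the four Sarkisov link diagrams. For the first, I would apply Lemma \ref{l_easy-cases} to the pair $(f_1, g_0)$. Since $\Theta$ lies in the interior of $\mathcal{L}_A(V)$, a neighbourhood of $\Theta$ in $\mathcal{O}_0$ is disjoint from the boundary of $\mathcal{L}_A(V)$, so the hypothesis of the lemma is satisfied. Because $\mathcal{O}_0 \subset \partial\mathcal{E}_A(V)$, cases (1a) and (2) of the lemma are impossible, forcing case (1b), which gives $\phi\colon X \to S$ as a Mori fibre space; the same argument applied to $(f_k, g_k)$ handles $\psi$.

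For the second, since $\Theta-\Phi$ is ample and ampleness is an open condition, I may choose $\Theta_1\in \mathcal{A}_1$ close enough to $\Theta$ that $\Theta_1-\Phi$ is still ample and $(Z,\Theta_1)$ is still kawamata log terminal. Lemma \ref{l_negative} then shows that $f_1$ is the result of running the $(K_Z+\Phi)$-MMP. To upgrade this to a Mori fibre space output, I pick $\Theta'\in \mathcal{O}_0$ near $\Theta$: the factorisation $\phi\circ f_1 = g_0$ gives $K_X + f_{1*}\Theta' \equiv 0/S$, so $-(K_X+f_{1*}\Phi)\equiv f_{1*}(\Theta'-\Phi)$ over $S$. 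Since $\Theta'-\Phi$ is ample on $Z$, the strict transform in $Z$ of a general $\phi$-fibre meets $\Theta'-\Phi$ positively, so $f_{1*}(\Theta'-\Phi)$ is $\phi$-ample and $\phi$ is the final MMP step; the symmetric argument handles $\psi$.

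For the third, I would traverse the interior edges $\mathcal{O}_1,\ldots,\mathcal{O}_{k-1}$, applying Lemma \ref{l_easy-cases} to each consecutive pair $(f_i,f_{i+1})$, reordering if necessary so that $\rho(X_i)\geq\rho(X_{i+1})$. Since each interior $\mathcal{O}_i$ lies in the interior of $\mathcal{E}_A(V)$, cases (1a) or (2) apply, yielding at each step a divisorial contraction, a small contraction onto a non-$\mathbb{Q}$-factorial base, or a flop. The pattern of Picard numbers then picks out the Sarkisov type: a divisorial step on each end with flops and/or small contractions between gives Type II, a single divisorial step on one end gives Type I or III, and pure flops throughout give Type IV. The divisor $\Xi$ demanded by the link definition is the push-forward of $\Theta$ to the relevant top variety, numerically trivial over the ample model $R$ of $K_Z+\Theta$ by construction, and Lemma \ref{l_negative} applied at each $\mathcal{A}_i$ near $\Theta$ confirms every step of the chain is a $(K_Z+\Phi)$-MMP step. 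The main obstacle I anticipate is the bookkeeping in this last step: carefully matching the sequence of birational maps from the polytope walk to exactly one of the four Sarkisov diagrams; the hypothesis that $\Theta$ lies in more than two polytopes is what forces the chain to contain both divisorial steps (when they occur) and to have genuinely distinct Mori fibre space endpoints.
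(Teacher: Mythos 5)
Your first two steps track the paper's own proof: $\phi$ and $\psi$ are Mori fibre spaces by \eqref{l_easy-cases} exactly as you argue, and the paper likewise picks $\Theta_1\in\mathcal{T}_1$ and $\Theta_k\in\mathcal{T}_k$ close to $\Theta$ with $\Theta_1-\Phi$, $\Theta_k-\Phi$ ample and invokes \eqref{l_negative}. The third step is where the real content lies, and there your proposal has two concrete gaps. First, ``the pattern of Picard numbers picks out the Sarkisov type'' only works once you know that a divisorial contraction can occur at most once on each side of the chain and that every other interior crossing is a flop; a priori \eqref{l_easy-cases} permits a divisorial contraction at every interior wall, and a chain with, say, two consecutive divisorial contractions matches none of the four link diagrams. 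The paper rules this out using the base $R$, the ample model of $K_Z+\Theta$: each $X_i$ carries a contraction to $R$ with $\rho(X_i/R)\leq 2$, and if $\rho(X_i/R)=1$ then $\map X_i.R.$ is a Mori fibre space, which by \eqref{t_polytope} forces $\mathcal{T}_i$ to have a facet on the boundary of $\mathcal{E}_A(V)$, i.e.\ $i=1$ or $k$. Hence $\rho(X_i/R)=2$ for $1<i<k$, so every crossing $\rmap X_i.X_{i+1}.$ with $1<i<k-1$ preserves the relative Picard number over $R$ and is a flop. This argument, which hinges on working relative to $R$, is absent from your proposal.

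Second, your case analysis never looks at the bottom of the diagram, the maps $s\colon\map S.R.$ and $t\colon\map T.R.$, and these are what actually separate the link types (and IV${}_m$ from IV${}_s$). In your ``pure flops throughout'' case one must still exclude the possibility that $s$ (or $t$) is a divisorial contraction, since then the diagram is not a link of any type. This is the one genuinely new argument in the paper's proof: if $s$ contracts a divisor $F$ with $\phi^*F=mE$, then $E$ is the relative stable base locus of $K_X+\Delta+\delta E$ over $R$, and running the $(K_X+\Delta+\delta E)$-MMP over $R$ (using $\rho(X/R)=2$) produces a birational contraction from $X$ to a Mori fibre space over $R$ which must be $Y$, exhibiting a link of type III and contradicting the assumption that $p$ and $q$ are both flops. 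Only after this exclusion does one conclude that $s$ and $t$ are either both Mori fibre spaces ($R$ being $\mathbb{Q}$-factorial, type IV${}_m$) or both small (type IV${}_s$). Since neither ingredient is present or hinted at in your write-up, the assembly step does not go through as proposed.
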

\begin{proof} We assume for simplicity of notation that $k\geq 3$.  The case $k\leq 2$ is
similar and we omit it.  The incidence relations between the corresponding polytopes yield
a commutative heptagon,
$$
\begin{diagram}
X'           &               & \rDashto &  &  Y'  \\
\dDashto^p           &   &      &            & \dDashto_q      \\
X          &               &      &            &  Y   \\
\dTo^{\phi} &              &      &            &  \dTo_{\psi}      \\
 S         &               &      &            &  T  \\
           & \rdTo(2,2)_s  &      & \ldTo(2,2)_t          &        \\
           &               &  R   &            & 
\end{diagram}
$$
where $p$ and $q$ are birational maps.  $\phi$ and $\psi$ are Mori fibre spaces by
\eqref{l_easy-cases}.  Pick $\Theta_1$ and $\Theta_k$ in the interior of $\mathcal{T}_1$
and $\mathcal{T}_k$ sufficiently close to $\Theta$ so that $\Theta_1-\Phi$ and
$\Theta_k-\Phi$ are ample.  As $X$ and $Y$ are $\mathbb{Q}$-factorial, \eqref{l_negative}
implies that $\phi$ and $\psi$ are possible outcomes of the $(K_Z+\Phi)$-MMP.  Let
$\Delta=f_*\Theta$.  Then $K_X+\Delta$ is numerically trivial over $R$.

Note that there are contraction morphisms $\map X_i.R.$ and that $\rho(X_i/R)\leq 2$.  If
$\rho(X_i/R)=1$ then $\map X_i.R.$ is a Mori fibre space.  By \eqref{t_polytope} there is
facet of $\mathcal{T}_i$ which is contained in the boundary of $\mathcal{E}_A(V)$ and so
$i=1$ or $k$.  Thus $\rmap X_i.X_{i+1}.$ is a flop, $1<i<k-1$.  Since $\rho(X'/R)=2$ it
follows that either $p$ is a divisorial contraction and $s$ is the identity or $p$ is a
flop and $s$ is not the identity.  We have a similar dichotomy for $q\colon\rmap Y'.Y.$
and $t\colon\map T.R.$.

There are then four cases.  If $s$ and $t$ are the identity then $p$ and $q$ are 
divisorial extractions and we have a link of type II.  

If $s$ is the identity and $t$ is not then $p$ is a divisorial extraction and $q$ is a
flop and we have a link of type I.  Similarly if $t$ is the identity and $s$ is not then
$q$ is a divisorial extraction and $p$ is a flop and we have a link of type III.  

Finally suppose neither $s$ nor $t$ is the identity.  Then both $p$ and $q$ are flops.
Suppose that $s$ is a divisorial contraction.  Let $F$ be the divisor contracted by $s$
and let $E$ be its inverse image in $X$.  Since $\phi$ has relative Picard number one
$\phi^*(F)=mE$, for some positive integer $m$.  Then $K_X+\Delta+\delta E$ is kawamata log
terminal for any $\delta>0$ sufficiently small and $E=\mathbf{B}(K_X+\Delta+\delta E/R)$.
If we run the $(K_X+\Delta+\delta E)$-MMP over $R$ then we end with a birational
contraction $\rmap X.W.$, which is a Mori fibre space over $R$.  Since $\rho(X/R)=2$,
$W=Y$ and we have a link of type III, a contradiction.  Similarly $t$ is never a
divisorial contraction.  If $s$ is a Mori fibre space then $R$ is $\mathbb{Q}$-factorial
and so $t$ must be a Mori fibre space as well.  This is a link of type IV${}_m$.  If $s$
is small then $R$ is not $\mathbb{Q}$-factorial and so $t$ is small as well.  Thus we have
a link of type IV${}_s$.  \end{proof}

\section{Proof of \eqref{t_sarkisov} }

\begin{lemma}\label{l_perturb} Let $\phi\colon\map X.S.$ and $\psi\colon\map Y.T.$ be two 
Sarkisov related Mori fibre spaces corresponding to two $\mathbb{Q}$-factorial kawamata
log terminal projective varieties $(X,\Delta)$ and $(Y,\Gamma)$.

Then we may find a smooth projective variety $Z$, two birational contractions
$f\colon\rmap Z.X.$ and $g\colon\rmap Z.Y.$, a kawamata log terminal pair $(Z,\Phi)$, an
ample $\mathbb{Q}$-divisor $A$ on $Z$ and a two dimensional rational affine subspace $V$
of $\WDiv_{\mathbb{R}}(Z)$ such that
\begin{enumerate} 
\item if $\Theta\in \mathcal{L}_A(V)$ then $\Theta-\Phi$ is ample,
\item $\mathcal{A}_{A,\phi\circ f}$ and $\mathcal{A}_{A,\psi\circ g}$ are not
contained in the boundary of $\mathcal{L}_A(V)$, 
\item $V$ satisfies (1-4) of \eqref{t_polytope}, 
\item $\mathcal{C}_{A,f}$ and $\mathcal{C}_{A,g}$ are two dimensional, and
\item $\mathcal{C}_{A,\phi\circ  f}$ and $\mathcal{C}_{A,\psi\circ g}$ are one
dimensional.   
\end{enumerate} 
\end{lemma}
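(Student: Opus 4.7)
The plan is to construct the required data in four stages: a common log resolution, realization of the Mori fibre spaces as ample models of explicit divisors, choice of a generic two-dimensional affine slice, and finally the choice of the klt boundary $\Phi$.

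By the Sarkisov related hypothesis there exist a $\mathbb{Q}$-factorial klt pair $(Z_0,\Phi_0)$ and birational contractions $h_X\colon Z_0 \dashrightarrow X$, $h_Y\colon Z_0 \dashrightarrow Y$ making $\phi$ and $\psi$ outputs of the $(K_{Z_0}+\Phi_0)$-MMP. I would take a smooth projective variety $Z$ with a birational morphism to $Z_0$ that simultaneously resolves the indeterminacy of $h_X$ and $h_Y$, so that $f\colon Z\to X$ and $g\colon Z\to Y$ are birational morphisms, hence birational contractions. To realize $\phi\circ f$ as an ample model, I would fix $H_S$ very ample on $S$ with $\phi^*H_S - K_X$ ample on $X$; a general effective $\mathbb{Q}$-divisor $\Delta'_X$ in this class makes $(X, \Delta'_X)$ klt with $K_X + \Delta'_X \sim_{\mathbb{R}} \phi^*H_S$. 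Pulling back to $Z$ and adjusting by an appropriate effective $f$-exceptional correction produces an effective klt boundary $\Theta_X$ on $Z$ satisfying $K_Z + \Theta_X \sim_{\mathbb{R}} f^*\phi^*H_S + E_X$ for some effective $f$-exceptional $E_X$, so that the ample model of $K_Z + \Theta_X$ is $\phi\circ f$. Analogously I would construct $\Theta_Y$ realizing $\psi\circ g$. Then I would pick an ample $\mathbb{Q}$-divisor $A$ on $Z$ sufficiently small that $B_X = \Theta_X - A$ and $B_Y = \Theta_Y - A$ are represented by effective $\mathbb{Q}$-divisors; write $\Theta_X = A + B_X$, $\Theta_Y = A + B_Y$ with $B_X, B_Y\geq 0$.

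For the affine subspace $V$, I would take a rational affine subspace $V' \subset \WDiv_{\mathbb{R}}(Z)$ containing $B_X, B_Y$ and spanning the N\'eron-Severi group of $Z$. By \eqref{t_polytope} the polytope decomposition of $\mathcal{E}_A(V')$ is finite, with chambers for the birational models $f, g$ full-dimensional and chambers for the non-birational ample models $\phi\circ f, \psi\circ g$ of codimension one and lying on the boundary of $\mathcal{E}_A(V')$. Let $V$ be a sufficiently generic rational $2$-dimensional affine subspace of $V'$ passing through both $B_X$ and $B_Y$. By \eqref{c_polytope}, for generic such $V$ the conclusions (1--4) of \eqref{t_polytope} pass from $V'$ to $V$, which is condition (3) of the present lemma. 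Genericity further ensures that $V$ cuts $\mathcal{C}_{A,f}, \mathcal{C}_{A,g}$ in dimension two and $\mathcal{C}_{A,\phi\circ f}, \mathcal{C}_{A,\psi\circ g}$ in dimension one, giving conditions (4) and (5), and that $A+B_X, A+B_Y$ lie in the interior of $\mathcal{L}_A(V)$, giving condition (2).

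Finally, the ampleness condition (1) on $\Theta - \Phi$ is the most delicate step. I would take $\Phi$ to be an effective klt $\mathbb{Q}$-divisor on $Z$ whose numerical class translates the compact image of $\mathcal{L}_A(V)$ in $N^1(Z)$ into the open ample cone; such a $\Phi$ is constructed by a Bertini-style argument after the scales have been chosen so that $\mathcal{L}_A(V)$ sits deeply inside the big cone. The main technical obstacle is precisely this simultaneous coordination of the scales of $H_S, H_T$ and $A$ with the choices of $V$ and $\Phi$, so that all five conditions of the lemma hold at once: changing the scale of $H_S, H_T$ alters $\Theta_X, \Theta_Y$ and hence the polytope $\mathcal{L}_A(V)$, while $\Phi$ must lie in both the klt locus and a numerical class shifting the polytope into the ample cone. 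The required flexibility is provided by the openness of the ample cone, the finite polytope decomposition from \eqref{t_polytope}, and the genericity afforded by \eqref{c_polytope}.
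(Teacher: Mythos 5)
Your overall architecture (resolve to make $f$ and $g$ morphisms, realise $\phi\circ f$ and $\psi\circ g$ as ample models, slice generically to dimension two via \eqref{c_polytope}) matches the paper's, but there is a genuine gap at the step you yourself flag as delicate: condition (1) cannot be obtained by choosing $\Phi$ \emph{after} $V$ has been fixed generically. The set $\{\,\Theta\,|\,\Theta-\Phi \text{ ample}\,\}$ is the translate of the ample cone by $\Phi$, and you need all of $\mathcal{L}_A(V)$ to lie in it. For your $V$, a point of $\mathcal{L}_A(V)$ has the form $A+B$ where $B$ is merely effective (a generic combination of $B_X$, $B_Y$ and further effective directions, constrained only by effectivity and log canonicity), so $A+B-\Phi=(A-\Phi)+B$ is big but in general not ample, no matter how small $\Phi\geq 0$ is; and making $\Phi$ large in the direction of the non-nef part of one $B$ destroys ampleness of $\Theta-\Phi$ at other points of the polytope. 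Worse, condition (1) at $\Theta_X$ together with \eqref{l_negative} forces $f$ (and likewise $g$) to be a result of running the $(K_Z+\Phi)$-MMP, so $\Phi$ cannot be an unrelated divisor produced by a Bertini argument: it must be (a lift of) the boundary that makes $X$ and $Y$ Sarkisov related in the first place. The paper's proof handles both points at once: it replaces $(Z_0,\Phi_0)$ by the log resolution $(W,\Psi+E)$ with $-E$ relatively ample, so that $W\to Z_0$ is itself a run of the $(K_W+\Psi+E)$-MMP and the new boundary remains MMP-compatible with $f$ and $g$; it then shrinks this boundary to $\Phi_0\leq\Phi$ with $A+\Phi_0-\Phi$ ample and builds $V_0$ as the translate by $\Phi_0$ of a span of \emph{nef} divisors ($H_i$, $F=f^*F_1$, $G=g^*G_1$). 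Then every $B$ with $A+B\in\mathcal{L}_A(V_0)$ satisfies $B-\Phi_0$ nef, whence $\Theta-\Phi=(A+\Phi_0-\Phi)+(B-\Phi_0)$ is ample for free.

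A secondary consequence of your set-up is that condition (2) also needs the paper's argument rather than bare genericity: since $\mathcal{A}_{A,\phi\circ f}$ spans only a hyperplane, one must exhibit a point of it in the interior of $\mathcal{L}_A(V_0)$, which the paper does by perturbing $\Phi_0+F+H$ by the numerically trivial divisor $\delta(G-\sum h_iH_i)$ — this is where spanning the N\'eron--Severi group by the \emph{effective ample} divisors $H_i$ is actually used, not just for \eqref{t_polytope}(3--4). To repair your proof you should build $V$ out of nef directions anchored at a shrinking $\Phi_0$ of the original Sarkisov boundary, rather than out of generic effective directions through $B_X$ and $B_Y$.
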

\begin{proof} By assumption we may find a $\mathbb{Q}$-factorial kawamata log terminal
pair $(Z,\Phi)$ such that $f\colon\rmap Z.X.$ and $g\colon\rmap Z.Y.$ are both outcomes of
the $(K_Z+\Phi)$-MMP.

Let $p\colon\map W.Z.$ be any log resolution of $(Z,\Phi)$ which resolves the
indeterminancy of $f$ and $g$.  We may write
$$
K_W+\Psi=p^*(K_Z+\Phi)+E',
$$
where $E'\geq 0$ and $\Psi\geq 0$ have no common components, $E'$ is exceptional and
$p_*\Psi=\Phi$.  Pick $-E$ ample over $Z$ with support equal to the full exceptional locus
such that $K_W+\Psi+E$ is kawamata log terminal.  As $p$ is $(K_W+\Psi+E)$-negative,
$K_Z+\Phi$ is kawamata log terminal and $Z$ is $\mathbb{Q}$-factorial, the
$(K_W+\Psi+E)$-MMP over $Z$ terminates with the pair $(Z,\Phi)$ by \eqref{l_negative}.
Replacing $(Z,\Phi)$ with $(W,\Psi+E)$, we may assume that $(Z,\Phi)$ is log smooth and
$f$ and $g$ are morphisms.

Pick general ample $\mathbb{Q}$-divisors $A, \llist H.k.$ on $Z$ such that $\llist H.k.$
generate the N\'eron-Severi group of $Z$.  Let
$$
H=A+\alist H.+.k..  
$$
Pick sufficiently ample divisors $C$ on $S$ and $D$ on $T$ such that
$$
-(K_X+\Delta)+\phi^*C \qquad \text{and} \qquad -(K_Y+\Gamma)+\psi^*D,
$$
are both ample.  Pick a rational number $0<\delta<1$ such that
$$
-(K_X+\Delta+\delta f_*H)+\phi^*C \qquad \text{and} \qquad -(K_Y+\Gamma+\delta g_*H)+\psi^*D,
$$
are both ample and $K_Z+\Phi+\delta H$ is both $f$ and $g$-negative.  Replacing $H$ by
$\delta H$ we may assume that $\delta=1$.  Now pick a $\mathbb{Q}$-divisor $\Phi_0\leq
\Phi$ such that $A+(\Phi_0-\Phi)$,
$$
-(K_X+f_*\Phi_0+ f_*H)+\phi^*C \quad \text{and} \quad -(K_Y+g_*\Phi_0+ g_*H)+\psi^*D,
$$
are all ample and $K_Z+\Phi_0+ H$ is both $f$ and $g$-negative.

 Pick general ample $\mathbb{Q}$-divisors $F_1\geq 0$ and $G_1\geq 0$
$$
F_1\sim_{\mathbb{Q}} -(K_X+f_*\Phi_0+ f_*H)+\phi^*C \quad \text{and} \quad G_1 \sim_{\mathbb{Q}} -(K_Y+g_*\Phi_0+ g_*H)+\psi^*D.
$$
Then
$$
K_Z+\Phi_0+ H+F+G,
$$
is kawamata log terminal, where $F=f^*F_1$ and $G=g^*G_1$.  

Let $V_0$ be the affine subspace of $\WDiv_{\mathbb{R}}(Z)$ which is the translate by
$\Phi_0$ of the vector subspace spanned by $\llist H.k.,F,G$.  Suppose that $\Theta=A+B\in
\mathcal{L}_A(V_0)$.  Then
$$
\Theta-\Phi=(A+\Phi_0-\Phi)+(B-\Phi_0),
$$
is ample, as $B-\Phi_0$ is nef by definition of $V_0$.  Note that $\Phi_0+F+H\in
\mathcal{A}_{A,\phi\circ f}(V_0)$, $\Phi_0+G+H\in \mathcal{A}_{A,\psi\circ g}(V_0)$, and
$f$, respectively $g$, is a weak log canonical model of $K_Z+\Phi_0+F+H$, respectively
$K_Z+\Phi_0+G+H$.  \eqref{t_polytope} implies that $V_0$ satisfies (1-4) of
\eqref{t_polytope}.

Since $\llist H.k.$ generate the N\'eron-Severi group of $Z$ we may find constants $\llist
h.k.$ such that $G$ is numerically equivalent to $\sum h_iH_i$.  Then $\Phi_0+F+\delta
G+H-\delta(\sum h_iH_i)$ is numerically equivalent to $\Phi_0+F+H$ and if $\delta>0$ is
small enough $\Phi_0+F+\delta G+H-\sum \delta h_iH_i\in \mathcal{L}_A(V_0)$.  Thus
$\mathcal{A}_{A,\phi\circ f}(V_0)$ is not contained in the boundary of
$\mathcal{L}_A(V_0)$.  Similarly $\mathcal{A}_{A,\psi\circ g}(V_0)$ is not contained in
the boundary of $\mathcal{L}_A(V_0)$.  In particular $\mathcal{A}_{A,f}(V_0)$ and
$\mathcal{A}_{A,g}(V_0)$ span $V_0$ and $\mathcal{A}_{A,\phi\circ f}(V_0)$ and
$\mathcal{A}_{A,\psi\circ g}(V_0)$ span affine hyperplanes of $V_0$, since
$\rho(X/S)=\rho(Y/T)=1$.

Let $V_1$ be the translate by $\Phi_0$ of the two dimensional vector space spanned by
$F+H-A$ and $F+G-A$.  Let $V$ be a small general perturbation of $V_1$, which is defined
over the rationals.  Then (2) holds.  (1) holds, as it holds for any two dimensional
subspace of $V_0$, (3) holds by \eqref{c_polytope} and this implies that (4) and (5)
hold.  \end{proof}

\begin{proof}[Proof of \eqref{t_sarkisov}] Pick $(Z,\Phi)$, $A$ and $V$ given by
\eqref{l_perturb}.  Pick points $\Theta_0\in \mathcal{A}_{A,\phi\circ f}(V)$ and
$\Theta_1\in\mathcal{A}_{A,\psi\circ g}(V)$ belonging to the interior of
$\mathcal{L}_A(V)$.  As $V$ is two dimensional, removing $\Theta_0$ and $\Theta_1$ divides
the boundary of $\mathcal{E}_A(V)$ into two parts.  The part which consists entirely of
divisors which are not big is contained in the interior of $\mathcal{L}_A(V)$.  Consider
tracing this boundary from $\Theta_0$ to $\Theta_1$.  Then there are finitely many $2\leq
i\leq l$ points $\Theta_i$ which are contained in more than two polytopes
$\mathcal{C}_{A,f_i}(V)$.  \eqref{t_two} implies that for each such point there is a
Sarkisov link $\sigma_i\colon\rmap X_i.Y_i.$ and $\sigma$ is the composition of these
links. \end{proof}

\bibliographystyle{/home/mckernan/Jewel/Tex/hamsplain}
\bibliography{/home/mckernan/Jewel/Tex/math}


\end{document}